\newcommand{\RN}[1]{%
	\textup{\uppercase\expandafter{\romannumeral#1}}%
}
\newtheorem{Theorem}{Theorem}[section]
\newtheorem{Proposition}{Proposition}[section]
\newtheorem{Lemma}{Lemma}[section]
\newtheorem{Corollary}{Corollary}[section]
\theoremstyle{definition}
\newtheorem{Definition}{Definition}[section]
\newtheorem{Remark}{Remark}
\newtheorem{Assumptions}{Hypothesis}[section]
\def\ve{\varepsilon}
\def\N{{\mathbb{N}}}
\def\R{{\mathbb{R}}}
\def\cA{{\mathcal{A}}}
\def\cU{{\mathcal{U}}}
\def\t\cU{{\widetilde{{\mathcal{U}}}}}
\newcommand\norm[1]{\left\lVert#1\right\rVert}
\def\ds{\displaystyle}
\title {Boundary controllability for a degenerate beam equation}
\author{{{\sc Alessandro Camasta}\thanks{The author is a member of the  {\it Gruppo Nazionale per l'Analisi Ma\-te\-matica, la Probabilit\`a e le loro Applicazioni (GNAMPA)} of the Istituto Nazionale di Alta Matematica (INdAM), a member of {\it UMI ``Modellistica Socio-Epidemiologica (MSE)''} and he is partially supported by PRIN 2017-2019 {\it Qualitative and quantitative aspects of nonlinear PDEs.} He is also supported by the project {\it Mathematical models for interacting dynamics on networks (MAT-DYN-NET)
CA18232.}}}\\
	Department of Mathematics\\ University of Bari Aldo Moro\\
	Via
	E. Orabona 4\\ 70125 Bari - Italy\\ e-mail: alessandro.camasta@uniba.it\\
	{\sc Genni Fragnelli}\thanks{The author is a member of the  {\it Gruppo Nazionale per l'Analisi Ma\-te\-matica, la Probabilit\`a e le loro Applicazioni (GNAMPA)} of the Istituto Nazionale di Alta Matematica (INdAM), a member of {\it UMI ``Modellistica Socio-Epidemiologica (MSE)''} and is supported by FFABR {\it Fondo per il finanziamento delle attivit\`a base di ricerca} 2017, by  PRIN 2017-2019 {\it Qualitative and quantitative aspects of nonlinear PDEs} and by the
HORIZON$_-$EU$_-$DM737 project 2022 {\it COntrollability of PDEs in the Applied Sciences (COPS)} at Tuscia University. She is also supported by the project {\it Mathematical models for interacting dynamics on networks (MAT-DYN-NET)
CA18232.}}\\
	Department of Ecology and Biology\\ Tuscia University\\
	Largo dell'Universit\`a, 01100 Viterbo - Italy\\ e-mail: genni.fragnelli@unitus.it}
\date{}
\begin{document}
	
	\maketitle

	\vspace{0.3cm}
	
	\centerline{ {\it  }}

	\begin{abstract}
		The paper deals with the controllability of a degenerate  beam equation. In particular, we assume that the left end of the beam is fixed, while a suitable control $f$ acts on the right end of it. As a first step we prove the existence of a solution for the homogeneous problem, then we prove some estimates on its energy. Thanks to them we prove an observability inequality and, using the  notion of solution by transposition, we prove that the initial problem is null controllable.
	\end{abstract}
	
	\noindent Keywords: 
	degenerate beam equation, fourth order operator, boundary observability, null controllability

	\noindent 2000AMS Subject Classification: 
	35L10, 35L80, 93B05, 93B07, 93D15

\section{Introduction}
We consider  a boundary controllability problem for a system modelling the bending vibrations of a degenerate beam of length $L=1$. Denote by $u$ the deflection of the beam and assume that the left end of the beam is fixed, while a suitable shear force $f$ is exerted on the right end of the beam; thus the motion describing beam bending is given by the following problem
	\begin{equation}\label{(P)}
		\begin{cases}
			u_{tt}(t,x)+Au(t,x)=0, &(t,x)\in Q_T,\\
			u(t,0)=0,\,\,u_x(t,0)=0,&t\in (0,T),\\
			u(t,1)=0,\,\,u_x(t,1)=f(t), &t \in (0,T),\\
			u(0,x)=u_0(x),\,\,u_t(0,x)=u_1(x),&x\in(0,1),
		\end{cases}
	\end{equation}
where $Q_T:=(0,T) \times (0,1)$, $T>0$, and $Au:=au_{xxxx}$. 

An equation similar to the one of \eqref{(P)} can be found , for example, in models that describe the vibrations of a bridge. Indeed, a suspension bridge may be seen as a beam of given length $L$, with hinged ends and whose downward deflection is measured by a function $u(t,x)$ subject to three forces. These forces can be summarised as the stays holding the bridge up as nonlinear springs with spring constant $k$, the constant weight per unit length of the bridge $W$ pushing it down, and the external forcing term $f(t,x)$. This leads to the equation
\[
u_{tt} + \gamma u_{xxxx} =-k u^+ +W+f(t,x),
\]
where $\gamma$ is a physical constant depending on the beam, Young's modulus and the second moment of inertia. 
If $\gamma$ is a function that depends on the variable $x$ and the external function acts only on the boundary, then we have exactly the equation of \eqref{(P)} (see, e.g., \cite{CF_Neumann} for other applications of \eqref{(P)}).

The novelty of this paper is that $a:[0,1]\to\mathbb{R}$ is such that $a(0)=0$ and $a(x) > 0$ for all $x \in (0,1]$. If there exists a boundary function $f$ that drives the solution of \eqref{(P)} to $0$ at a given time $T>0$, in the sense that
\[
u(T,x)= u_t(T,x)=0
\]
for all $x \in (0,1)$, then the problem is said null controllable.

Boundary exact controllability on linear beam problems has been studied for many years by a lot of authors, see, for example,  \cite{bugariu}, \cite{krabs}-\cite{rao} and the references therein. For quasi-linear beams or nonlinear beams, we refer to \cite{yaoweiss} and \cite{cao}, \cite{cindea}, respectively. 

In all the previous papers the equation is always non degenerate. The first results on boundary controllability for degenerate problems can be found in \cite{ABCG}, \cite{BFM wave eq} and \cite{gueye}. In particular, in 
\cite{gueye} the author considers the equation in divergence form
\[
u_{tt}-(x^\alpha u_x)_x=0
\]
for $\alpha\in (0,1)$ and the control acts in the degeneracy point $x=0$. Later on, in \cite{ABCG} the authors consider the equation
\begin{equation}\label{div1}
u_{tt}-(a(x)u_x)_x=0
\end{equation}
where $a\sim x^K$, $ K >0$. In this  case the authors establish observability
inequalities when $K<2$; if $K \ge 2$, 
a negative result is given. We remark that  in \cite{gueye} the observability inequality, and hence null controllability, is obtained via spectral analysis, while in \cite{ABCG} via suitable energy estimates.
In \cite{BFM wave eq} the same problem of \eqref{div1} in non divergence case with a drift term is considered. Clearly, the presence of a drift term leads the authors to use different spaces with respect to the ones in \cite{ABCG} or in \cite{gueye} and gives rise to some new difficulties. However, thanks to some suitable assumptions on the drift term, the authors prove some estimates on the energy that are crucial to prove an observability inequality and hence null controllability for the initial problem.

As far as we know, this is the first paper where the boundary controllability for a {\it degenerate} beam equation is considered. For the function $a$, we consider two cases: the weakly degenerate case and the strongly degenerate one. More precisely, we have the following definitions:
\begin{Definition}
A function $a$ is {\it weakly degenerate at $0$}, $(WD)$ for short, if $a\in\mathcal{C}[0,1]\cap\mathcal{C}^1(0,1]$, $a(0)=0$, $a>0$ on $(0,1]$ and  if
	\begin{equation}\label{sup}
		K:=\sup_{x\in (0,1]}\frac{x|a'(x)|}{a(x)},
	\end{equation}
	then $K\in (0,1)$.
\end{Definition}
\begin{Definition}
A function $a$ is {\it strongly degenerate at $0$}, $(SD)$ for short,  if $a\in\mathcal{C}^1[0,1]$, $a(0)=0$, $a>0$ on $(0,1]$ and in \eqref{sup} we have $K\in [1,2)$.
\end{Definition}

Clearly,  the presence of the degenerate operator $Au:= au_{xxxx}$ leads us to  use different spaces with respect to the ones in \cite{ABCG}, \cite{BFM wave eq} or \cite{gueye} and in these new spaces we prove 
an estimate similar to the following one
\[
E_y(0) \le C \int_0^T y_{xx}^2(t,1)dt,
\]
where $y$ and $E_y$ are the solution and the energy of  the homogeneous adjoint problem associated to \eqref{(P)}, respectively, and $C$ is a strictly positive constant. Then, thanks to the introduction of the solution by transposition for \eqref{(P)}, we prove that \eqref{(P)} is null controllable.

The paper is organized in the following way: in Section \ref{sezione2} we consider the homogeneous problem associated to \eqref{(P)} and we prove that this problem is well posed in the sense of Theorem \ref{Theorem 2.6}; in Section \ref{Section 3} we consider the energy associated to it and we prove two estimates on the energy from below and from above. In Section \ref{Section 4}, thanks to these estimates and to the boundary observability (see Corollary \ref{cor 3.12}), we prove that the original problem has a unique solution by transposition and this solution is null controllable. The paper ends with the Appendix where we  give two proofs to make the article self-contained.

We underline that in the paper $C$ denotes universal positive constants which are allowed to vary from line to line.
\section{Well posedness for the problem with homogeneous Dirichlet boundary conditions}\label{sezione2}
In this section we study the well posedness of the following degenerate hyperbolic problem with Dirichlet boundary conditions
\begin{equation}\label{(P_1)}
	\begin{cases}
		y_{tt}(t,x)+Ay(t,x)=0, &(t,x)\in (0,+\infty) \times (0,1),\\
		y(t,0)=y(t,1)=0,&t\in (0,+\infty),\\
		y_x(t,0)=y_x(t,1)=0,&t\in (0,+\infty),\\
		y(0,x)=y^0_T(x),&x\in(0,1),\\
		y_t(0,x)=y^1_T(x),&x\in(0,1).
	\end{cases}
\end{equation}
We underline the fact that the choice of denoting initial data with $T$-dependence is connected to the approach for null controllability used in the next sections.

As in \cite{CF}, \cite{CF_Neumann} or \cite{CF_Wentzell} let us consider the following weighted Hilbert spaces:
\begin{equation*}
	L^2_{\frac{1}{a}}(0, 1):=\biggl \{u\in L^2(0, 1):\int_{0}^{1}\frac{u^2}{a}\,dx<+\infty \biggr \}
\end{equation*}
and
\begin{equation*}
	H^i_{\frac{1}{a}}(0, 1):= L^2_{\frac{1}{a}}(0, 1)\cap H^i_0(0, 1), \quad i=1,2,
\end{equation*}
with the related norms
\begin{equation*}
	\norm{u}^2_{L^2_{\frac{1}{a}}(0, 1)}:= \int_{0}^{1}\frac{u^2}{a}\,dx\quad\,\,\,\,\,\,\,\forall\,u\in L^2_{\frac{1}{a}}(0, 1)
\end{equation*}
and
\begin{equation*}
	\norm{u}_{H^i_{\frac{1}{a}}(0, 1)}^2:=\norm{u}^2_{L^2_{\frac{1}{a}}(0, 1)} + \sum_{k=1}^{i}\Vert u^{(k)}\Vert^2_{L^2(0, 1)} \quad\,\,\,\,\,\,\,\forall\,u\in H^i_{\frac{1}{a}}(0, 1),
\end{equation*}
$i=1,2$, respectively. We recall that $H^i_0(0, 1):= \{u\in H^i(0, 1): u^{(k)}(j)=0,\, j=0,1,\, k=0,...,i-1\}$, with $u^{(0)}=u$ and $i=1,2$.
Observe that for all $u \in H^i_{\frac{1}{a}}(0, 1)$, using the fact that $u^{(k)}(j)=0$ for all $k=0,...,i-1$ and $j=0,1$, it is easy to prove that $\|u\|_{H^i_{\frac{1}{a}}(0, 1)}^2$ is equivalent to the following one 
\[
\| u \|_i ^2:= \|u\|_{L^2_{\frac{1}{a}}(0, 1)}^2+ \|u^{(i)}\|_{L^2(0,1)}^2, \quad i=1,2.
\]
If $i=1$ the previous assertion is clearly true.

Moreover, under an additional assumption on $a$, one can prove that the previous norms are equivalent to the following one
\[
\|u\|_{i, \sim }:= \|u^{(i)}\|_{L^2(0,1)}, \quad i=1,2.
\]
Indeed, assume
\begin{Assumptions}\label{ipo1}
The function $a:[0,1]\to\mathbb{R}$ is continuous in $[0,1]$, $a(0)=0$, $a>0$ on $(0,1]$ and there exists $K\in (0,2)$ such that the function
	\begin{equation}\label{crescente}
		x\mapsto\frac{x^K}{a(x)}
	\end{equation}
is non decreasing near $x=0$.
\end{Assumptions}
Observe that if $a$ is weakly or strongly degenerate, then (\ref{sup}) implies that the function
\begin{equation*}
	x\mapsto\frac{x^\gamma}{a(x)}
\end{equation*}
is non decreasing in $(0,1]$ for all $\gamma\ge K$; in particular, \eqref{crescente} holds globally. Moreover,
\begin{equation}\label{ipotesi limite}
	\lim_{x\to 0}\frac{x^\gamma}{a(x)}=0
\end{equation}
for all $\gamma >K$.  The properties above will play a central role in the next sections.

Thanks to Hypothesis \ref{ipo1}, one can prove the following equivalence.
\begin{Proposition}\label{norms} Assume Hypothesis \ref{ipo1}. Then for all $u\in H^i_{\frac{1}{a}}(0, 1)$  the norms $\norm{u}_{H^i_{\frac{1}{a}}(0, 1)}$, $\| u\|_i$ and
$
\|u\|_{i, \sim }$, $i=1,2$,
are equivalent.
\end{Proposition}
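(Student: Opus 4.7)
The plan is to verify the two equivalences $\norm{u}_{H^i_{\frac{1}{a}}(0,1)} \sim \norm{u}_i$ and $\norm{u}_i \sim \norm{u}_{i,\sim}$ separately, so that the three norms are tied together in a chain.

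For the first equivalence, the case $i=1$ is trivial since the two norms coincide by definition (as the author already notes). For $i=2$, the only extra term to control is $\norm{u'}_{L^2(0,1)}^2$, and this follows from the classical Poincar\'e inequality applied to $u'\in H^1_0(0,1)$, which is a consequence of $u\in H^2_0(0,1)$; it yields $\norm{u'}_{L^2(0,1)}\le C\norm{u''}_{L^2(0,1)}$, whence $\norm{u}_{H^2_{\frac{1}{a}}(0,1)}\sim \norm{u}_2$.

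For the second equivalence the inequality $\norm{u}_{i,\sim}\le \norm{u}_i$ is immediate, and the content lies in the Hardy-type bound
\[
\int_0^1 \frac{u^2}{a}\,dx \le C \int_0^1 \bigl(u^{(i)}\bigr)^2\,dx.
\]
The first step is to extract from Hypothesis \ref{ipo1} the pointwise estimate $a(x)\ge c\, x^K$ on $(0,1]$: on a neighbourhood of $0$ it comes directly from the non-decreasing character of $x^K/a(x)$, and on any subinterval $[\delta,1]$ the continuity and strict positivity of $a$ finish the job (after enlarging $c$). Next, I would use the boundary conditions $u^{(k)}(0)=0$ for $k=0,\dots,i-1$ to write $u$ as an $i$-fold iterated integral of $u^{(i)}$ and apply Cauchy--Schwarz at each step to obtain
\[
|u(x)|^2 \le C\, x^{2i-1}\, \norm{u^{(i)}}_{L^2(0,1)}^2,
\]
for example $|u(x)|^2\le x\norm{u'}_{L^2}^2$ when $i=1$ and $|u(x)|^2\le \tfrac{4}{9} x^3\norm{u''}_{L^2}^2$ when $i=2$. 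Dividing by $a(x)\ge c x^K$ and integrating,
\[
\int_0^1 \frac{u^2}{a}\,dx \le C\,\norm{u^{(i)}}_{L^2(0,1)}^2 \int_0^1 x^{2i-1-K}\,dx,
\]
and the last integral is finite because $2i-1-K>-1$: for $i=1$ this is exactly the assumption $K<2$, and for $i=2$ it is automatic.

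The only non-routine step is this Hardy-type estimate, and the main obstacle is precisely the integrability $K<2i$, which is where the upper bound $K<2$ of Hypothesis \ref{ipo1} is genuinely used; the rest is a standard matching of the weighted norms term by term.
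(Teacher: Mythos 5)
Your proof is correct, and it reaches the key weighted Hardy inequality $\int_0^1 u^2/a\,dx\le C\int_0^1(u^{(i)})^2dx$ by a genuinely different, self-contained route. The paper instead \emph{cites} this inequality for $i=1$ from an external result (\cite[Proposition 2.6]{cfr1}) and then, for $i=2$, chains it with the classical Hardy inequality applied to $z:=u'\in H^1_0(0,1)$, namely $\int_0^1 u^2/a\,dx\le C\int_0^1(u')^2dx\le C\int_0^1 z^2/x^2\,dx\le 4C\int_0^1(u'')^2dx$. You bypass both the citation and the classical Hardy step: you first convert the monotonicity of $x^K/a(x)$ near $0$ (together with continuity and positivity of $a$ away from $0$) into the pointwise lower bound $a(x)\ge c\,x^K$, then use the vanishing of $u^{(k)}$ at $0$ and iterated Cauchy--Schwarz to get $|u(x)|^2\le C x^{2i-1}\Vert u^{(i)}\Vert^2_{L^2(0,1)}$, and finally integrate $x^{2i-1-K}$, which is where the restriction $K<2$ enters (only needed for $i=1$; for $i=2$ any $K<4$ would do). All the steps check out, including the reduction of the remaining term $\Vert u'\Vert_{L^2}$ via Poincar\'e for the $H^2$-norm comparison. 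What your approach buys is a proof readable without the external reference and with explicit constants; what the paper's approach buys is brevity and the reuse of a lemma already established in the degenerate-parabolic literature. Your observation that for $i=2$ the bound holds under the weaker condition $K<4$ is a genuine (if minor) bonus not visible in the paper's argument, though of course the hypothesis $K<2$ is needed elsewhere in the paper.
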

\begin{proof} 

By \cite[Proposition 2.6]{cfr1}, one has that there exists $C>0$ such that
\[
\int_0^1\frac{u^2}{a}dx \le C \int_0^1 (u')^2dx,
\]
for all  $u \in L^2_{\frac{1}{a}}(0,1) \cap H^1_0(0,1)$.
Thus the thesis follows immediately if $i=1$. 

Now, assume $i=2$. Proceeding as for $i=1$ and applying the classical Hardy's inequality to $z:= u'$ (observe that $z \in H^1_0(0,1)$), we have
\[
\int_0^1\frac{u^2}{a}dx \le C \int_0^1 (u')^2dx \le C \int_0^1 \frac{z^2}{x^2}dx \le 4  C\int_0^1 (z')^2dx= 4 C\int_0^1 (u'')^2dx
\]
and the thesis follows.
\end{proof}

Hence, assuming Hypothesis \ref{ipo1} in the rest of the paper, we can use indifferently $\|\cdot\|_i$ or $\|u\|_{i, \sim }$ in place of $\|\cdot\|_{H^i_{\frac{1}{a}}(0, 1)}$, $i=1,2$.

Using the previous spaces, it is possible to define the operator $(A,D(A))$ by 
\begin{equation*}
	Au:=au'''' \quad \text{for all } u \in
	D(A):=\left\{u\in H^2_{\frac{1}{a}}(0, 1): au''''\in L^2_{\frac{1}{a}}(0, 1) \right\}.
\end{equation*}

Moreover, 
\begin{equation*}
	\langle Au,v\rangle_{L^2_{\frac{1}{a}}(0,1)}=\int_{0}^{1}u''v''dx,
\end{equation*}
i.e.
\begin{equation}\label{GF1}
	\int_{0}^{1}u''''v\,dx=\int_{0}^{1}u''v''dx,
\end{equation}
for all $(u,v)\in D(A) \times H^2_{\frac{1}{a}}(0,1)$ (see \cite[Proposition 2.1]{CF}).

Another important Hilbert space, related to the well posedness of \eqref{(P_1)}, is the following one
\begin{equation*}
	\mathcal{H}_0:=H^2_{\frac{1}{a}}(0,1)\times L^2_{\frac{1}{a}}(0,1),
\end{equation*}
endowed with the inner product
\begin{equation*}
	\langle (u,v),(\tilde{u},\tilde{v})\rangle_{\mathcal{H}_0}:=\int_{0}^{1}u''\tilde{u}''dx+\int_{0}^{1}\frac{v\tilde{v}}{a}dx
\end{equation*}
 and with the norm
\begin{equation*}
	\|(u,v)\|^2_{\mathcal{H}_0}:=\int_{0}^{1}(u'')^2dx+\int_{0}^{1}\frac{v^2}{a}dx
\end{equation*}
for every $(u,v), \;(\tilde{u},\tilde{v})\in\mathcal{H}_0$. Then, consider the matrix operator $\mathcal{A}:D(\mathcal{A})\subset\mathcal{H}_0\to \mathcal{H}_0$ given by
\[	\mathcal{A}:=\begin{pmatrix}
	0 & Id \\
	-A & 0
\end{pmatrix},\quad\,\,\,\,\,\,\,D(\mathcal{A}):=D(A)\times H^2_{\frac{1}{a}}(0,1).\]
Using this operator, we rewrite (\ref{(P_1)}) as a Cauchy problem. Indeed, setting
\[	\mathcal{U}(t):=\begin{pmatrix}
	y(t) \\
	y_t(t)
\end{pmatrix}\quad\,\,\,\,\text{and}\quad\,\,\,\,\mathcal{U}_0:=\begin{pmatrix}
y^0_T \\
y^1_T
\end{pmatrix},
\]
one has that (\ref{(P_1)}) can be formulated as
\begin{equation}\label{Cauchy problem}
	\begin{cases}
		\dot{\mathcal{U}}(t)=\mathcal{A}\,\mathcal{U}(t), &t\ge 0,\\
		\mathcal{U}(0)=\mathcal{U}_0.
	\end{cases}
\end{equation}
\begin{Theorem}\label{generator}
Assume Hypothesis $\ref{ipo1}$. Then the operator $(\cA, D(\cA))$ is non positive with dense domain and generates a contraction semigroup  $(S(t))_{t \ge 0}$. 
\end{Theorem}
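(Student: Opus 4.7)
The plan is to apply the Lumer--Phillips theorem, so I will verify three points: dissipativity of $\cA$ on $D(\cA)$, density of $D(\cA)$ in $\cH_0$, and the range condition $R(\lambda I - \cA) = \cH_0$ for some (equivalently, all) $\lambda > 0$.

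For dissipativity, I take $U = (u, v) \in D(\cA) = D(A) \times H^2_{\frac{1}{a}}(0,1)$, and compute
\[
\langle \cA U, U \rangle_{\cH_0} = \langle (v, -Au), (u,v) \rangle_{\cH_0} = \int_0^1 v'' u'' \, dx - \int_0^1 \frac{(au'''')\, v}{a}\, dx = \int_0^1 v'' u'' \, dx - \int_0^1 u'''' v \, dx.
\]
Since the pair $(u, v)$ belongs exactly to $D(A) \times H^2_{\frac{1}{a}}(0,1)$, the integration by parts identity \eqref{GF1} applies and the two integrals cancel. Thus $\langle \cA U, U \rangle_{\cH_0} = 0$, which shows that $\cA$ is non-positive (in fact skew-symmetric, which will also give dissipativity of $-\cA$ later if a group is desired).

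For density, since $D(\cA) = D(A) \times H^2_{\frac{1}{a}}(0,1)$, it suffices to note that $C^\infty_c(0,1) \subset D(A)$ is dense in $H^2_{\frac{1}{a}}(0,1)$ (combining classical density of test functions in $H^2_0(0,1)$ with the norm equivalence of Proposition \ref{norms}), and that $H^2_{\frac{1}{a}}(0,1)$ is dense in $L^2_{\frac{1}{a}}(0,1)$ (by a standard cut-off plus mollification argument, using that $1/a$ is locally bounded away from $x = 0$). For the range condition, I fix $\lambda > 0$ and $F = (f_1, f_2) \in \cH_0$ and seek $(u, v)$ satisfying $\lambda u - v = f_1$, $\lambda v + Au = f_2$; eliminating $v = \lambda u - f_1$ reduces the system to the single elliptic equation $\lambda^2 u + a u'''' = \lambda f_1 + f_2$. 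I will solve this weakly on $H^2_{\frac{1}{a}}(0,1)$ by applying the Lax--Milgram theorem to the bilinear form
\[
B(u, \phi) := \lambda^2 \int_0^1 \frac{u\phi}{a}\, dx + \int_0^1 u'' \phi'' \, dx,
\]
tested against the continuous linear functional $\phi \mapsto \int_0^1 (\lambda f_1 + f_2)\phi/a\, dx$. Continuity and coercivity of $B$ on $H^2_{\frac{1}{a}}(0,1)$ are immediate from the equivalence of $\|\cdot\|_{H^2_{\frac{1}{a}}}$ with $\|\cdot\|_2$ given by Proposition \ref{norms}.

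The delicate point, and the one I expect to be the main obstacle, is the regularity upgrade from the Lax--Milgram weak solution $u \in H^2_{\frac{1}{a}}(0,1)$ to $u \in D(A)$. Testing the variational identity against $\phi \in C^\infty_c(0,1)$ and reading it distributionally gives $a u'''' = \lambda f_1 + f_2 - \lambda^2 u$; since $f_1, u \in H^2_{\frac{1}{a}} \subset L^2_{\frac{1}{a}}$ and $f_2 \in L^2_{\frac{1}{a}}$, the right-hand side lies in $L^2_{\frac{1}{a}}(0,1)$, so indeed $au'''' \in L^2_{\frac{1}{a}}(0,1)$ and $u \in D(A)$. Setting $v := \lambda u - f_1 \in H^2_{\frac{1}{a}}(0,1)$, the pair $(u, v)$ belongs to $D(\cA)$ and satisfies $(\lambda I - \cA)(u, v) = F$, so the range condition holds. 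Combining the three points, Lumer--Phillips yields that $\cA$ generates a strongly continuous contraction semigroup $(S(t))_{t \geq 0}$ on $\cH_0$.
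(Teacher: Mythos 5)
Your proof is correct and follows essentially the same route as the paper: dissipativity via the integration-by-parts identity \eqref{GF1}, and surjectivity of $\lambda\mathcal I-\mathcal A$ via Lax--Milgram applied to the bilinear form $\int_0^1 uz/a\,dx+\int_0^1 u''z''\,dx$ (the paper simply fixes $\lambda=1$), followed by the same distributional regularity upgrade to conclude $u\in D(A)$. The only cosmetic difference is that you check density of $D(\mathcal A)$ by hand, whereas the paper gets it for free from the cited version of the Lumer--Phillips theorem on a reflexive space.
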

\begin{proof}
According to \cite[Corollary 3.20]{nagel}, it is sufficient to prove that $ \mathcal A:D(\mathcal A)\to \mathcal H_0$ is dissipative and that $\mathcal I-\mathcal A$ is surjective, where
\[
\mathcal {I}:=\begin{pmatrix}
	Id & 0 \\
	0 & Id
\end{pmatrix}.
\]

\underline{$ \mathcal A$ is dissipative:} take $(u,v) \in D(\mathcal A)$. Then $(u,v) \in D(A)\times H^2_{\frac{1}{a}}(0,1)$ and so \eqref{GF1} holds. Hence, 
\[
\begin{aligned}
\langle \mathcal A (u,v), (u,v) \rangle_{\mathcal H_0} &=\langle (v, -Au), (u,v) \rangle _{\mathcal H_0}\\
&=\int_0^1 u''v''dx- \int_0^1 vAu\frac{1}{a}dx =0.
\end{aligned}
\]
By \cite[Chapter 2.3]{nagel}, the operator $\mathcal A$ is dissipative.

\underline{$\mathcal I - \mathcal A$ is surjective:} 
take  $(f,g) \in \mathcal H_0=H^2_{{\frac{1}{a}}}(0,1)\times L^2_{{\frac{1}{a}}}(0,1)$. We have to prove that there exists $(u,v) \in D(\mathcal A)$ such that
\begin{equation}\label{4.3'}
 ( \mathcal I-\mathcal A)\begin{pmatrix} u\\
v\end{pmatrix} = \begin{pmatrix}f\\
g \end{pmatrix} \Longleftrightarrow  \begin{cases} v= u -f,\\
Au + u= f+ g.\end{cases}
\end{equation}
Thus, define $F: H^2_{{\frac{1}{a}}}(0,1) \rightarrow \R$ as
\[
F(z)=\int_0^1(f+g) z\frac{1}{a}  dx,
\]
for all $z \in H^2_{{\frac{1}{a}}}(0,1)$.
Obviously, $F\in \left(H^{2}_{{\frac{1}{a}}}(0,1)\right)^*$, being $\left(H^{2}_{{\frac{1}{a}}}(0,1)\right)^*$ the dual space of $H^2_{{\frac{1}{a}}}(0,1)$ with respect to the pivot space $L^2_{{\frac{1}{a}}}(0,1)$. Now, introduce the bilinear form $L:H^{2}_{{\frac{1}{a}}}(0,1)\times H^{2}_{{\frac{1}{a}}}(0,1)\to \R$ given by
\[
L(u,z):=  \int_0^1 u z \frac{1}{a} dx + \int_0^1u''z''dx 
\]
for all $u, z \in H^{2}_{{\frac{1}{a}}}(0,1)$. Clearly, thanks to the equivalence of the norms given before, $L(u,z)$ is coercive. Moreover $L(u,z)$ is 
 continuous: indeed, for all $u,z \in H^{2}_{{\frac{1}{a}}}(0,1)$, we have
\[
|L(u,z)| \le  \|u\|_{ L^2_{\frac{1}{a}} (0,1) }\|z\|_ {L^2_{\frac{1}{a}} (0,1) } +\|u''\|_{L^2(0,1)}\|z''\|_ {L^2(0,1)}
\]
and the conclusion follows again by the equivalence of the norms.

As a consequence, by the Lax-Milgram Theorem, there exists a unique solution $u \in H^{2}_{{\frac{1}{a}}}(0,1)$ of
\[
L(u,z)= F(z)  \mbox{ for all }z\in H^{2}_{{\frac{1}{a}}}(0,1),\]
namely
\begin{equation}\label{4.4'}
\int_0^1 u z \frac{1}{a} dx + \int_0^1 u''z''dx = \int_0^1(f+g) z \frac{1}{a} dx 
\end{equation}
for all $z \in H^{2}_{{\frac{1}{a}}}(0,1)$.

Now, take $v:= u-f$; then $v \in H^{2}_{{\frac{1}{a}}}(0,1)$.
We will prove that $(u,v) \in D(\mathcal A)$ and solves \eqref{4.3'}. To begin with, \eqref{4.4'} holds for every $z \in \mathcal C_c^\infty(0,1).$ Thus we have
\[
\int_0^1 u''z''dx = \int_0^1(f+g-u) z \frac{1}{a} dx 
\]
 for every $z \in\mathcal  C_c^\infty(0,1).$ Hence $\ds (u'')''= (f+g- u)  \frac{1}{a}$ a.e. in $(0,1)$, i.e. $
Au = f+g-u
$ a.e. in $(0,1)$. Thus, as in \cite[Theorem 2.1]{CF}, $u \in D(A)$;  hence $(u,v) \in D(\mathcal A)$ and $ u +A u =f+g$.  
Recalling that $v = u - f$,  we have that $(u,v)$ solves \eqref{4.3'}.

\end{proof}

Now, if $\cU_0  \in \mathcal H_0$ then $\cU(t)= S(t)\cU_0$ is the mild solution of \eqref{Cauchy problem}. Also, if $\cU_0 \in D(\mathcal A)$, then the solution is classical and the equation in \eqref{(P_1)} holds for all $t \ge0$. Hence, by \cite[Propositions 3.1 and 3.3]{daprato}, one has the following theorem. 
\begin{Theorem}\label{Theorem 2.6}
	Assume Hypothesis \ref{ipo1}.
	If $(y^0_T,y^1_T)\in\mathcal{H}_0$, then there exists a unique mild solution
	\begin{equation*}
		y\in \mathcal{C}^1([0,+\infty);L^2_{\frac{1}{a}}(0,1))\cap \mathcal{C}([0,+\infty);H^2_{\frac{1}{a}}(0,1))
	\end{equation*}
of (\ref{(P_1)}) which depends continuously on the initial data $(y^0_T,y^1_T)\in \mathcal{H}_0$. Moreover, if $(y^0_T,y^1_T)\in D(\mathcal{A})$, then the solution $y$ is classical, in the sense that
\begin{equation*}
	y\in \mathcal{C}^2([0,+\infty);L^2_{\frac{1}{a}}(0,1))\cap \mathcal{C}^1([0,+\infty);H^2_{\frac{1}{a}}(0,1))\cap \mathcal{C}([0,+\infty);D(A))
\end{equation*}
and the equation of (\ref{(P_1)}) holds for all $t\ge 0$.
\end{Theorem}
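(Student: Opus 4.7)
The plan is to deduce Theorem \ref{Theorem 2.6} directly from the fact that $(\cA, D(\cA))$ generates a contraction semigroup $(S(t))_{t \ge 0}$ on $\cH_0$, which has just been established. Once we rephrase \eqref{(P_1)} as the abstract Cauchy problem \eqref{Cauchy problem}, the existence, uniqueness and continuous dependence of a mild solution $\cU(t) = S(t)\cU_0$ are immediate from standard semigroup theory (e.g.\ \cite[Propositions 3.1 and 3.3]{daprato}); the only real work consists in unpacking the regularity of the vector-valued function $\cU(t)$ into the componentwise regularity claimed for $y$.

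More precisely, I would first observe that for $\cU_0 = (y_T^0, y_T^1) \in \cH_0$, the mild solution satisfies $\cU \in \cC([0,+\infty); \cH_0)$. Since $\cH_0 = H^2_{\frac{1}{a}}(0,1) \times L^2_{\frac{1}{a}}(0,1)$ and $\cU = (y, y_t)^T$, projecting on each component yields
\[
y \in \cC([0,+\infty); H^2_{\frac{1}{a}}(0,1)), \qquad y_t \in \cC([0,+\infty); L^2_{\frac{1}{a}}(0,1)),
\]
and the second relation is exactly $y \in \cC^1([0,+\infty); L^2_{\frac{1}{a}}(0,1))$. Continuous dependence on the initial data then follows from the contractivity of $S(t)$, since $\|\cU(t)\|_{\cH_0} \le \|\cU_0\|_{\cH_0}$ translates into the desired estimate on $(y, y_t)$.

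For the second part, assume $\cU_0 \in D(\cA) = D(A) \times H^2_{\frac{1}{a}}(0,1)$. Semigroup theory (same reference) then provides a classical solution with
\[
\cU \in \cC^1([0,+\infty); \cH_0) \cap \cC([0,+\infty); D(\cA)),
\]
and the identity $\dot \cU(t) = \cA \cU(t)$ holds for all $t \ge 0$. Reading this componentwise, the first slot gives $y \in \cC([0,+\infty); D(A))$ and the second slot gives $y_t \in \cC([0,+\infty); H^2_{\frac{1}{a}}(0,1))$, while the $\cC^1$-regularity of $\cU$ upgrades $y_t$ to have a derivative $y_{tt} \in \cC([0,+\infty); L^2_{\frac{1}{a}}(0,1))$. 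Together these assertions are precisely the classical-regularity statement in the theorem, and the second row of $\dot \cU = \cA \cU$ reads $y_{tt} = -Ay$, i.e.\ the equation of \eqref{(P_1)} holds pointwise in $t$.

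I do not expect a genuine obstacle here: the generation theorem absorbs all the analytic difficulty (dissipativity via \eqref{GF1}, surjectivity of $\cI - \cA$ via Lax--Milgram in the equivalent norms of Proposition \ref{norms}), so the proof of Theorem \ref{Theorem 2.6} is essentially bookkeeping, making sure that the product structure of $\cH_0$ and $D(\cA)$ correctly translates into the stated scalar regularity of $y$, $y_t$ and $y_{tt}$, and that the boundary and initial conditions in \eqref{(P_1)} are encoded by membership in $H^2_{\frac{1}{a}}(0,1)$ and by $\cU(0) = \cU_0$ respectively.
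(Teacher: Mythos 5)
Your proposal is correct and follows exactly the paper's route: the paper likewise derives Theorem \ref{Theorem 2.6} directly from Theorem \ref{generator} together with the standard semigroup results in \cite[Propositions 3.1 and 3.3]{daprato}, reading the regularity of $\cU(t)=S(t)\cU_0$ componentwise. Your version merely spells out the bookkeeping that the paper leaves implicit.
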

\begin{Remark}\label{remark revers}
\begin{enumerate}
\item	Due to the reversibility (in time) of the  equation, solutions exist with the same regularity also for $t<0$.
\item Observe that the proofs of Theorems \ref{generator} and \ref{Theorem 2.6} are independent of \eqref{crescente}.
\end{enumerate} 
\end{Remark}

\section{Energy estimates}\label{Section 3}
In this section we prove some estimates of the energy associated to the solution of \eqref{(P_1)}. To this aim we give the next definition.

\begin{Definition}
Let $y$ be a mild solution of (\ref{(P_1)}) and consider its energy given by the continuous function defined as
\begin{equation*}
	E_y(t):=\frac{1}{2}\int_0^1 \Biggl (\frac{y^2_t(t,x)}{a(x)}+y^2_{xx}(t,x) \Biggr )dx\quad\,\,\,\,\,\,\forall\;t\ge 0.
\end{equation*}
\end{Definition}
The definition above guarantees that the classical conservation of the energy still holds true also in this degenerate situation.

\begin{Theorem}\label{teorema energia costante}
	Assume Hypothesis \ref{ipo1} and let $y$ be a  mild solution of (\ref{(P_1)}). Then
	\begin{equation}\label{E costante nel tempo}
		E_y(t)=E_y(0)\quad\,\,\,\,\,\,\,\forall\;t\ge 0.
	\end{equation}
\end{Theorem}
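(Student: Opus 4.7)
The natural strategy is to differentiate $E_y$ in $t$, show that $E_y'(t)\equiv 0$ for classical solutions using the equation together with the integration by parts identity \eqref{GF1}, and then extend to mild solutions by density of $D(\mathcal{A})$ in $\mathcal{H}_0$ together with the continuous dependence provided by Theorem \ref{Theorem 2.6}.

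First I would fix $(y^0_T,y^1_T)\in D(\mathcal{A})$ and work with the classical solution. By Theorem \ref{Theorem 2.6} we have $y\in \mathcal{C}^2([0,+\infty);L^2_{\frac{1}{a}}(0,1))\cap \mathcal{C}^1([0,+\infty);H^2_{\frac{1}{a}}(0,1))\cap \mathcal{C}([0,+\infty);D(A))$, which is precisely the regularity required to differentiate under the integral sign and obtain
\[
E_y'(t)=\int_0^1\frac{y_t(t,x)\,y_{tt}(t,x)}{a(x)}dx+\int_0^1 y_{xx}(t,x)\,y_{xxt}(t,x)dx.
\]
Using the equation $y_{tt}=-Ay=-a\,y_{xxxx}$, the first integral equals $-\int_0^1 y_t\,y_{xxxx}\,dx$. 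Since at each $t$ we have $y(t)\in D(A)$ and $y_t(t)\in H^2_{\frac{1}{a}}(0,1)$, the identity \eqref{GF1} applies with $u=y(t)$ and $v=y_t(t)$, yielding
\[
\int_0^1 y_{xxxx}(t,x)\,y_t(t,x)dx=\int_0^1 y_{xx}(t,x)\,y_{xxt}(t,x)dx.
\]
Substituting back gives $E_y'(t)\equiv 0$, hence \eqref{E costante nel tempo} holds for every classical solution.

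To pass from classical to mild solutions, I would use the density of $D(\mathcal{A})$ in $\mathcal{H}_0$: choose a sequence $(y^{0,n}_T,y^{1,n}_T)\in D(\mathcal{A})$ converging to $(y^0_T,y^1_T)$ in $\mathcal{H}_0$, and let $y^n$ be the corresponding classical solutions. By the previous step $E_{y^n}(t)=E_{y^n}(0)$ for all $t\geq 0$ and all $n$. The continuous dependence stated in Theorem \ref{Theorem 2.6} ensures that $y^n(t)\to y(t)$ in $H^2_{\frac{1}{a}}(0,1)$ and $y^n_t(t)\to y_t(t)$ in $L^2_{\frac{1}{a}}(0,1)$ for every $t\geq 0$, uniformly on compact time intervals. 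Since the energy is precisely $\frac12\|(y(t),y_t(t))\|_{\mathcal{H}_0}^2$, it passes to the limit and we obtain $E_y(t)=E_y(0)$ for the mild solution.

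I do not expect genuine obstacles here: the differentiation step is routine once the functional setting is in place, and the key analytic content has already been carried out in the proof of \eqref{GF1}, i.e.\ the boundary terms produced by integrating $\int_0^1 y_{xxxx} y_t$ by parts vanish thanks to the Dirichlet and clamped conditions $y=y_x=0$ at $x=0,1$ (inherited by $y_t$ as well). The only small point to verify carefully is that the degenerate weight $1/a$ does not create a boundary contribution at $x=0$ when rewriting $\int_0^1 y_t y_{tt}/a\,dx$, which is guaranteed by the very definition of $D(A)$ and by \eqref{GF1}, so no extra argument is needed.
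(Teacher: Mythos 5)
Your proof is correct and follows essentially the same route as the paper: the paper multiplies the equation by $y_t/a$ and integrates, which is the same computation as your direct differentiation of $E_y$, with \eqref{GF1} handling the spatial term and a density/continuity argument extending the identity from classical to mild solutions. No issues.
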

\begin{proof}
First of all suppose that $y$ is a classical solution. Then multiplying the equation
	\begin{equation*}
		y_{tt}+Ay=0
	\end{equation*}
by $\frac{y_t}{a}$, integrating over $(0,1)$ and using the formula of integration by parts (\ref{GF1}), one has
\begin{equation*}
	\begin{aligned}
		0&=\frac{1}{2}\int_0^1 \Bigl (\frac{y^2_t}{a}\Bigr )_tdx+\int_0^1 y_{xxxx}\,y_tdx\\
		&=\frac{1}{2}\frac{d}{dt}\Biggl (\int_0^1\Bigl (\frac{y^2_t}{a}+y^2_{xx}\Bigr )dx\Biggr )=\frac{d}{dt}E_y(t).
	\end{aligned}
\end{equation*}
Consequently the energy $E_y$ associated to $y$ is constant.

If $y$ is a mild solution, we approximate the initial data with more regular ones, obtaining associated classical solutions for which (\ref{E costante nel tempo}) holds. Thanks to the usual estimates we can pass to the limit and obtain the thesis.
\end{proof}

In the next results we establish some inequalities for
the energy from above and from below; these inequalities will be used in the next section to establish the controllability result.
 First of all, we start with the following theorem, whose proof is based on the next lemma.

\begin{Lemma}\label{Lemma_bordo}
	Assume Hypothesis \ref{ipo1}. 
	\begin{enumerate}
	\item
	 If   $y \in H^1_{\frac{1}{a}}(0,1)$, then   $\ds
\lim_{x\rightarrow 0} \frac{x}{a}y^2(x)=0.$
\item If $y\in D(A)$, then
$
		y''\in W^{1,1}(0,1).
	$
	\end{enumerate}
\end{Lemma}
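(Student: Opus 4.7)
The plan is to treat the two claims separately. For Part 1, the observation is that $y \in H^1_{\frac{1}{a}}(0,1) \subset H^1_0(0,1)$ forces $y(0)=0$, so writing $y(x) = \int_0^x y'(s)\,ds$ and applying Cauchy--Schwarz gives $y^2(x) \le x \int_0^x (y'(s))^2\,ds \le x\,\|y'\|_{L^2(0,1)}^2$. Multiplying by $x/a(x)$ yields
\[
\frac{x}{a(x)} y^2(x) \le \frac{x^2}{a(x)} \|y'\|_{L^2(0,1)}^2,
\]
and the conclusion follows from \eqref{ipotesi limite} with $\gamma = 2$, which is admissible since $K<2$ under Hypothesis \ref{ipo1}.

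For Part 2, the starting point is that $y \in D(A)$ gives $\sqrt{a}\,y'''' \in L^2(0,1)$. Because $a$ is bounded away from $0$ on every interval $[\delta,1]$, the function $y''''$ lies in $L^2_{loc}((0,1])$, so $y'' \in H^2_{loc}((0,1])$ and the classical derivative $y'''$ exists and is continuous on $(0,1]$. It therefore suffices to establish $y''' \in L^1(0,1)$: once this is known, the fundamental theorem of calculus produces a finite limit $\lim_{x \to 0^+} y''(x)$, so $y''$ extends to an absolutely continuous function on $[0,1]$ with $y'''$ as its weak derivative in $L^1$, i.e.\ $y'' \in W^{1,1}(0,1)$.

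The core computation is to fix $x_0 \in (0,1]$, write $|y'''(x)| \le |y'''(x_0)| + \int_x^{x_0} |y''''(s)|\,ds$ for $x \in (0,x_0]$, integrate over $(0,x_0)$, and swap the order of integration via Fubini to turn the double integral into $\int_0^{x_0} s\,|y''''(s)|\,ds$. Cauchy--Schwarz with weight $1/\sqrt{a}$ then yields
\[
\int_0^{x_0} s\,|y''''(s)|\,ds \le \left(\int_0^{x_0}\frac{s^2}{a(s)}\,ds\right)^{1/2} \|\sqrt{a}\, y''''\|_{L^2(0,1)},
\]
and $\int_0^{x_0} s^2/a(s)\,ds$ is finite since $s^2/a(s) = s^{2-K}\cdot s^K/a(s)$ with $s^K/a(s)$ bounded near $0$ (monotone nondecreasing and positive from Hypothesis \ref{ipo1}) and $2-K>0$. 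I expect the main obstacle to be precisely this weighted estimate: the factor $s$ produced by Fubini is crucial, since the naive bound $\int_0^1 |y''''|\,ds \le \|a^{-1/2}\|_{L^2}\|\sqrt{a}\,y''''\|_{L^2}$ fails whenever $K \ge 1$. Pairing the Fubini factor $s$ with $\sqrt{a}\,y''''\in L^2$ is exactly what the degeneracy range $K<2$ is designed to accommodate.
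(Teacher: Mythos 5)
Both parts of your proof are correct. For Part 1 you take a genuinely different route from the paper: you exploit the boundary condition $y(0)=0$ (available because $H^1_{\frac{1}{a}}(0,1)\subset H^1_0(0,1)$) together with Cauchy--Schwarz to get $y^2(x)\le x\|y'\|_{L^2(0,1)}^2$, which reduces the claim to $x^2/a(x)\to 0$; since $2>K$ this single argument covers every $K\in(0,2)$. The paper instead splits into $K<1$, where boundedness of $y^2$ and $x/a\to 0$ suffice, and $K\ge 1$, where it proves $z:=\frac{x}{a}y^2\in W^{1,1}(0,1)$ and excludes a nonzero limit by contradiction with $y^2/a\in L^1(0,1)$. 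Your version is shorter and avoids the $W^{1,1}$ machinery, at the price of using the vanishing of $y$ at $0$, which the paper's $K\ge1$ argument does not need. (A minor point: \eqref{ipotesi limite} is stated in the paper under (WD)/(SD), but it does hold near $0$ under Hypothesis \ref{ipo1} alone, since monotonicity of $x^K/a$ near $0$ gives boundedness there; the paper's own proof of Part 1 uses this implicitly too, so your appeal to it with $\gamma=2$ is legitimate.) For Part 2 your argument is essentially the paper's: in both, the decisive step is the Fubini exchange producing the factor $s$ in $\int_0^{x_0}s\,|y''''(s)|\,ds$, which is then controlled by pairing $s/\sqrt{a}$ (bounded, or square-integrable, precisely because $K<2$) with $\sqrt{a}\,y''''\in L^2(0,1)$. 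You organize the conclusion by proving $y'''\in L^1(0,1)$ first and then invoking the fundamental theorem of calculus to extend $y''$ continuously to $[0,1]$, whereas the paper iterates the fundamental theorem to show directly that $\lim_{\delta\to 0}y''(\delta)$ exists and is a little terse about why $y'''\in L^1(0,1)$; your formulation is arguably the cleaner way to land exactly on the definition of $W^{1,1}(0,1)$.
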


The previous results are proved in \cite[Lemma 3.2.5]{BFM wave eq} and \cite[Proposition 3.2]{CF_Neumann}, respectively, anyway we rewrite their proof in the Appendix
to make the paper self-contained.
\begin{Theorem}\label{Teorema 1}
	Assume  $a$  (WD)  or (SD) at $0$. If $y$ is a classical solution of (\ref{(P_1)}), then $y_{xx}(\cdot,1)\in L^2(0,T)$  for any $T>0$ and
	\begin{equation}\label{prima uguaglianza}
		\begin{aligned}
			\frac{1}{2}\int_0^T y^2_{xx}(t,1)dt&=\int_0^1\Bigl [y_t\frac{x^2}{a}y_x \Bigr ]^{t=T}_{t=0}dx+\frac{1}{2}\int_{Q_T}\frac{x}{a}y^2_t\,\Bigl (2-\frac{xa'}{a}\Bigr )dx\,dt\\
			&+3\int_{Q_T}xy^2_{xx}dx\,dt.
		\end{aligned}
	\end{equation}
\end{Theorem}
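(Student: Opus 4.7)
The plan is the classical multiplier method applied with the multiplier $\frac{x^2}{a(x)}y_x$. The factor $\frac{1}{a}$ is chosen to cancel the coefficient $a$ in front of $y_{xxxx}$, so that the four spatial integrations by parts are carried out against the smooth weight $\psi(x):=a(x)\cdot\frac{x^2}{a(x)}=x^2$; the power $x^2$ is the smallest one that makes $\frac{x^2}{a}$ integrable near $0$ (by \eqref{ipotesi limite}, since $K<2$) and that produces the desired coefficient $\tfrac12$ in front of $y_{xx}^2(t,1)$ on the left-hand side.

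For the temporal part, I first integrate by parts in $t$:
\[
\int_{Q_T} y_{tt}\,\frac{x^2}{a}\,y_x\,dx\,dt = \Bigl[\int_0^1 y_t\,\frac{x^2}{a}\,y_x\,dx\Bigr]_0^T - \int_{Q_T} y_t\,\frac{x^2}{a}\,y_{xt}\,dx\,dt.
\]
Rewriting $y_t y_{xt}=\tfrac12(y_t^2)_x$ and integrating by parts in $x$ gives $\tfrac12\int_{Q_T}\bigl(\frac{x^2}{a}\bigr)'y_t^2\,dx\,dt$ plus boundary terms. At $x=1$ the condition $y(t,1)=0$ forces $y_t(t,1)=0$; at $x=0$ the same reasoning applies, and in addition $\frac{x^2}{a(x)}\to 0$ by \eqref{ipotesi limite} while $y_t(t,\cdot)\in H^2_{\frac{1}{a}}(0,1)$ is bounded on $[0,1]$ for a classical solution. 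A direct computation of $(x^2/a)'=\frac{x}{a}\bigl(2-\frac{xa'}{a}\bigr)$ reproduces the second term of \eqref{prima uguaglianza}.

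For the spatial part, $a\,y_{xxxx}\cdot\frac{x^2}{a}=x^2 y_{xxxx}$, and four successive integrations by parts against $\psi=x^2$ (using $y_x(t,0)=y_x(t,1)=0$ to kill several boundary pieces) give
\[
\int_0^1 x^2 y_{xxxx}y_x\,dx = \tfrac32\int_0^1 \psi'\,y_{xx}^2\,dx - \tfrac12\int_0^1 \psi'''\,y_x^2\,dx - \tfrac12[\psi\,y_{xx}^2]_0^1.
\]
Since $\psi'=2x$ and $\psi'''=0$, this collapses to $3\int_0^1 x\,y_{xx}^2\,dx-\tfrac12\,y_{xx}^2(t,1)$: the contribution of $[\psi\,y_{xx}^2]$ at $x=0$ vanishes because $\psi(0)=0$ and, by Lemma \ref{Lemma_bordo}(2) applied pointwise in $t$, $y_{xx}(t,\cdot)\in W^{1,1}(0,1)\hookrightarrow\mathcal{C}[0,1]$, so $x^2 y_{xx}^2(t,x)\to 0$ as $x\to 0$. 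Summing the two computations, the identity $\int_{Q_T}(y_{tt}+a\,y_{xxxx})\frac{x^2}{a}y_x\,dx\,dt=0$ rearranges exactly into \eqref{prima uguaglianza}; since each term on the right is finite for a classical solution, the identity in particular yields $y_{xx}(\cdot,1)\in L^2(0,T)$.

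The main technical obstacle is making the four integrations by parts and the temporal one rigorous near the degeneracy at $x=0$. I would carry out the manipulations first on $(\varepsilon,1)\times(0,T)$ and then let $\varepsilon\to 0^+$, discharging the traces at $\varepsilon$ via \eqref{ipotesi limite} and Lemma \ref{Lemma_bordo}, together with the regularity $y\in \mathcal{C}([0,T];D(A))\cap \mathcal{C}^1([0,T];H^2_{\frac{1}{a}}(0,1))$ guaranteed by Theorem \ref{Theorem 2.6} for classical solutions. The assumption $K<2$ is crucial precisely at this step: it ensures both $\frac{x^2}{a}\in L^1(0,1)$ and $\lim_{x\to 0}\frac{x^2}{a(x)}=0$, which are what make the boundary traces at $0$ disappear.
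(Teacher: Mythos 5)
Your multiplier and overall architecture coincide with the paper's: multiply by $\frac{x^2}{a}y_x$, integrate by parts in $t$ and then in $x$ against the weight $\psi=x^2$, truncate at $x=\varepsilon$ and pass to the limit. The temporal part and the identification $(x^2/a)'=\frac{x}{a}\bigl(2-\frac{xa'}{a}\bigr)$ are fine, and your treatment of the traces $\varepsilon^2y_{xx}^2(t,\varepsilon)$ and $\varepsilon\,y_x(t,\varepsilon)y_{xx}(t,\varepsilon)$ via Lemma \ref{Lemma_bordo}(2) (continuity of $y_{xx}$ up to $x=0$) is legitimate.

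There is, however, a genuine gap at the one trace you do not discuss: the term $\bigl[\psi\,y_xy_{xxx}\bigr]$ at $x=\varepsilon$, i.e.\ $\varepsilon^2 y_x(t,\varepsilon)y_{xxx}(t,\varepsilon)$, produced by the very first spatial integration by parts. You claim the boundary pieces at $x=0$ are ``killed'' by $y_x(t,0)=0$ and $\psi(0)=0$, and that the remaining traces are discharged ``via \eqref{ipotesi limite} and Lemma \ref{Lemma_bordo}''; but neither of those controls $y_{xxx}$ near the degeneracy. A classical solution only gives $\sqrt{a}\,y_{xxxx}\in L^2(0,1)$, so $y_{xxxx}$, and with it $y_{xxx}$, may blow up as $x\to 0$; Lemma \ref{Lemma_bordo}(2) yields $y_{xxx}(t,\cdot)\in L^1(0,1)$, which is not enough to conclude $\varepsilon^2y_{xxx}(t,\varepsilon)\to 0$ (a continuous $L^1$ function can have $x f(x)\not\to 0$ along a sequence). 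This is exactly where the paper spends most of its effort: it writes $\delta y_{xxx}(t,\delta)=y_{xxx}(t,1)-\int_\delta^1 y_{xxx}\,dx-\int_\delta^1 xy_{xxxx}\,dx$, reduces everything to integrals of $y_{xxxx}$, and bounds $\delta\int_\delta^1|y_{xxxx}|\le C\,\delta^{1-K/2}\Vert\sqrt{a}\,y_{xxxx}\Vert_{L^2}$ using the monotonicity of $x^K/a$ and $K<2$, thereby proving that $\lim_{\delta\to0}\delta y_{xxx}(t,\delta)$ exists in $\R$; combined with $y_x(t,\delta)\to0$ and the extra factor $\delta$, the trace vanishes. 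Without this (or an equivalent) argument your limit $\varepsilon\to0^+$ does not close, so you should supply it; the rest of your computation, including the resulting coefficients $3\int_{Q_T}xy_{xx}^2$ and $-\tfrac12 y_{xx}^2(t,1)$, matches the paper's identity.
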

\begin{proof}
	Multiply the equation in (\ref{(P_1)}) by $\displaystyle\frac{x^2y_x}{a}$ and integrate over $Q_T=(0,T)\times (0,1)$. Integrating by parts we obtain
	\begin{equation}\label{passaggi}
		\begin{aligned}
			0&=\int_{Q_T}y_{tt}\frac{x^2y_x}{a}dx\,dt+\int_{Q_T}x^2y_xy_{xxxx}dx\,dt\\
			&=\int_0^1\Bigl [y_t\frac{x^2y_x}{a} \Bigr ]^{t=T}_{t=0}dx-\int_{Q_T}y_t\frac{x^2}{a}y_{xt}dx\,dt+\int_{Q_T}x^2y_xy_{xxxx}dx\,dt\\
			&=\int_0^1\Bigl [y_t\frac{x^2y_x}{a} \Bigr ]^{t=T}_{t=0}dx-\int_{Q_T}\frac{1}{2}\frac{x^2}{a}(y^2_t)_xdx\,dt+\int_{Q_T}x^2y_xy_{xxxx}dx\,dt\\
			&=\int_0^1\Bigl [y_t\frac{x^2y_x}{a} \Bigr ]^{t=T}_{t=0}dx-\frac{1}{2}\int_0^T\Bigl [\frac{x^2}{a}y^2_t \Bigr ]^{x=1}_{x=0}dt+\frac{1}{2}\int_{Q_T}\Bigl (\frac{x^2}{a} \Bigr )'y^2_tdx\,dt\\
			&+\int_{Q_T}x^2y_xy_{xxxx}dx\,dt.
		\end{aligned}
	\end{equation}
Now, $\ds \Bigl (\frac{x^2}{a} \Bigr )'=\frac{2xa-x^2a'}{a^2}=\frac{x}{a}\Bigl (2-\frac{xa'}{a} \Bigr)$. Hence, (\ref{passaggi}) reads
\begin{equation}\label{equazione principale}
\begin{aligned}
	\int_0^1\Bigl [y_t\frac{x^2}{a}y_x \Bigr ]^{t=T}_{t=0}dx&-\frac{1}{2}\int_0^T\Bigl [\frac{x^2}{a}y^2_t \Bigr ]^{x=1}_{x=0}dt+\frac{1}{2}\int_{Q_T}\frac{x}{a}y^2_t\Bigl (2-\frac{xa'}{a} \Bigr )dx\,dt\\
	&+\int_{Q_T}x^2y_xy_{xxxx}dx\,dt=0.
	\end{aligned}
\end{equation}
Furthermore,
 by the regularity of the solution, $y_t \in H^2_{\frac{1}{a}}(0,1)\subset H^1_{\frac{1}{a}}(0,1)$, thus
 \[
\lim_{x \rightarrow 0} \frac{x^2}{a(x)}y^2_t(t,x)=0,
\]
by  Lemma \ref{Lemma_bordo}; therefore, by the boundary conditions of $y$, one has $\ds\frac{1}{a(1)} y_t^2(t,1)=0$.
Now, consider the term $\int_{Q_T}x^2y_xy_{xxxx}dx\,dt$, which is well defined; indeed, using the fact that $\frac{x^2}{\sqrt a}$ is non decreasing, we have that there exists a positive constant $C$ such that
\begin{equation*}
\Biggl |x^2\frac{1}{\sqrt a}y_x\,\sqrt a\,y_{xxxx}\Biggr | \le C\bigl |y_x\bigr | \bigl |\sqrt{a}y_{xxxx}\bigr |.
\end{equation*}
By hypothesis, $y_x$ and $\sqrt{a}y_{xxxx}$ belong to $L^2(0,1)$; thus, by the H\"older's inequality, $x^2y_xy_{xxxx}\in L^1(0,1)$.

 Let $\delta >0$ and write
\begin{equation}\label{secondo integrale}
	\int_{Q_T}x^2y_xy_{xxxx}dx\,dt=\int_0^T\int_0^\delta x^2y_xy_{xxxx}dx\,dt+\int_0^T\int_\delta^1x^2y_xy_{xxxx}dx\,dt.
\end{equation}
Obviously, by the absolute continuity of the integral,
\begin{equation}\label{limite0}
	\lim_{\delta\to 0}\int_0^T\int_0^\delta x^2y_xy_{xxxx}dx\,dt= 0.
\end{equation}
Now we will estimate the second term in (\ref{secondo integrale}). By definition of $D(A)$, setting $I:= (\delta, 1]$, we have $y_{xxxx}\in L^2(I)$, thus $y\in H^4 (I)$ by \cite[Lemma 2.1]{CF}. Hence, we can integrate by parts
\begin{equation}\label{espressione con BT}
	\begin{aligned}
	&	\int_0^T\int_\delta^1x^2y_xy_{xxxx}dx\,dt=\int_0^T[x^2y_xy_{xxx}]^{x=1}_{x=\delta}dt-\int_0^T\int_\delta^1(x^2y_x)_xy_{xxx}dx\,dt\\
		&=\int_0^T[x^2y_xy_{xxx}]^{x=1}_{x=\delta}dt-\int_0^T\int_\delta^1(2xy_x+x^2y_{xx})y_{xxx}dx\,dt\\
		&=\int_0^T[x^2y_xy_{xxx}]^{x=1}_{x=\delta}dt-\int_0^T\int_\delta^1 2xy_xy_{xxx}dx\,dt-\frac{1}{2}\int_0^T\int_\delta^1x^2(y^2_{xx})_xdx\,dt\\
		&=\int_0^T[x^2y_xy_{xxx}]^{x=1}_{x=\delta}dt-\int_0^T[2xy_xy_{xx}]^{x=1}_{x=\delta}dt+2\int_0^T\int_\delta^1(xy_x)_xy_{xx}dx\,dt\\
		&-\frac{1}{2}\int_0^T[x^2y^2_{xx}]^{x=1}_{x=\delta}dt+\frac{1}{2}\int_0^T\int_\delta^12xy^2_{xx}dx\,dt\\
		&=\int_0^T[x^2y_xy_{xxx}]^{x=1}_{x=\delta}dt-\int_0^T[2xy_xy_{xx}]^{x=1}_{x=\delta}dt-\frac{1}{2}\int_0^T[x^2y^2_{xx}]^{x=1}_{x=\delta}dt\\
		&+2\int_0^T\int_\delta^1y_xy_{xx}dx\,dt+2\int_0^T\int_\delta^1xy^2_{xx}dx\,dt+\int_0^T\int_\delta^1xy^2_{xx}dx\,dt\\
		&=\int_0^T[x^2y_xy_{xxx}]^{x=1}_{x=\delta}dt-\int_0^T[2xy_xy_{xx}]^{x=1}_{x=\delta}dt-\frac{1}{2}\int_0^T[x^2y^2_{xx}]^{x=1}_{x=\delta}dt\\
		&+\int_0^T\int_\delta^1(y^2_x)_xdx\,dt+3\int_0^T\int_\delta^1xy^2_{xx}dx\,dt\\
		&=\int_0^T[x^2y_xy_{xxx}]^{x=1}_{x=\delta}dt-\int_0^T[2xy_xy_{xx}]^{x=1}_{x=\delta}dt-\frac{1}{2}\int_0^T[x^2y^2_{xx}]^{x=1}_{x=\delta}dt\\
		&+\int_0^T[y^2_x]^{x=1}_{x=\delta}dt+3\int_0^T\int_\delta^1xy^2_{xx}dx\,dt.
	\end{aligned}
\end{equation}
But $xy^2_{xx}\in L^1(0,1)$ and using the absolute continuity of the integral,
we obtain
\begin{equation*}
	\lim_{\delta\to 0}\int_0^T\int_\delta^1xy^2_{xx}dx\,dt=\int_0^T\int_0^1xy^2_{xx}dx\,dt.
\end{equation*}
Now we evaluate the boundary terms that appear in (\ref{espressione con BT}). To this aim observe that, thanks to the boundary conditions of $y$,
\begin{equation*}
	\begin{aligned}
		&[x^2y_xy_{xxx}]^{x=1}_{x=\delta}-[2xy_xy_{xx}]^{x=1}_{x=\delta}-\frac{1}{2}[x^2y^2_{xx}]^{x=1}_{x=\delta}+[y^2_x]^{x=1}_{x=\delta}\\
		&=-\delta^2y_x(t,\delta)y_{xxx}(t,\delta)+2\delta y_x(t,\delta)y_{xx}(t,\delta)-\frac{1}{2}y^2_{xx}(t,1)+\frac{1}{2}\delta^2y^2_{xx}(t,\delta)-y^2_x(t,\delta).
	\end{aligned}
\end{equation*}
Hence, we have to estimate the following quantities:
\begin{equation*}
	\delta^2y_x(t,\delta)y_{xxx}(t,\delta),
\end{equation*}
\begin{equation*}
	\delta y_x(t,\delta)y_{xx}(t,\delta),
\end{equation*}
\begin{equation*}
	\delta^2y^2_{xx}(t,\delta),
\end{equation*}
\begin{equation*}
	y^2_x(t,\delta)
\end{equation*}
as $\delta$ goes to $0$. Naturally, since $y\in H^2_0(0,1)$, $y_x$ is a continuous function. This implies that
\begin{equation}\label{limite}
	\lim_{\delta\to 0}y^2_x(t,\delta)= y_x^2(t,0)=0.
\end{equation}
Thanks to Lemma \ref{Lemma_bordo}, 
\begin{equation*}
	\lim_{\delta\to 0}\delta^2y^2_{xx}(t,\delta)=0=\lim_{\delta\to 0}	\delta y_x(t,\delta)y_{xx}(t,\delta).
\end{equation*}
It remains to prove that
\begin{equation}\label{quarto BT}
	\exists\lim_{\delta\to 0}	\delta^2y_x(t,\delta)y_{xxx}(t,\delta)=0.
\end{equation}
By \eqref{limite} it is sufficient to prove that $\displaystyle\exists\lim_{\delta\to 0}\delta y_{xxx}(t,\delta)\in\mathbb{R}$. To this aim, we rewrite $\delta y_{xxx}(t,\delta)$ as
\begin{equation}\label{Koraidon}
\begin{aligned}
	\delta y_{xxx}(t,\delta)&=y_{xxx}(t,1)-\int_\delta^1(xy_{xxx}(t,x))_xdx=y_{xxx}(t,1)-\int_\delta^1y_{xxx}(t,x)dx\\
	&-\int_\delta^1xy_{xxxx}(t,x)dx.
	\end{aligned}
\end{equation}
Note that $xy_{xxxx}(t, x)=\sqrt{a(x)}y_{xxxx}(t, x)\frac{x}{\sqrt{a(x)}}\in L^2(0,1)\subseteq L^1(0,1)$ (indeed $\sqrt{a}y_{xxxx}\in L^2(0,1)$ and $\ds \frac{x}{\sqrt{a(x)}}\in L^{\infty}(0,1)$, thanks to \eqref{crescente}). Hence, by the absolute continuity of the integral
$\displaystyle\lim_{\delta\to 0}\int_\delta^1 xy_{xxxx}(x)dx = \int_0^1 xy_{xxxx}(x)dx.$
On the other hand,
\begin{equation*}
\begin{aligned}
	\int_\delta^1y_{xxx}(t,x)dx&=\int_\delta^1\Biggl (y_{xxx}(t,1)-\int_x^1y_{xxxx}(t,s)ds \Biggr )dx\\
	&=(1-\delta)y_{xxx}(t,1)-\int_\delta^1\int_x^1y_{xxxx}(t,s)ds\,dx.
\end{aligned}
\end{equation*}
Now we estimate the last term in the previous equation
\begin{equation}\label{Miraidon}
\begin{aligned}
	\int_\delta^1\int_x^1y_{xxxx}(t,s)ds\,dx&=\int_\delta ^1 \int_\delta ^s y_{xxxx}(t,s)dxds=\int_\delta^1y_{xxxx}(t,s)(s-\delta)ds\\
	&=\int_\delta^1sy_{xxxx}(t,s)ds-\delta \int_\delta^1y_{xxxx}(t,s)ds.
\end{aligned}
\end{equation}
As before,  $\displaystyle\lim_{\delta\to 0}\int_\delta^1 sy_{xxxx}(t,s)ds = \int_0^1 sy_{xxxx}(t,s)ds$.
Moreover, as far as the second term in the last member of (\ref{Miraidon}) is concerned, we have
\begin{equation*}
	\begin{aligned}
		0&<\delta \int_\delta^1|y_{xxxx}(t,s)|ds=\delta \int_\delta^1\sqrt{a(s)}\frac{|y_{xxxx}(t,s)|}{\sqrt{a(s)}}ds\\
		&\le \delta \Biggl (\int_\delta^1\frac{1}{a(s)}ds\Biggr )^{\frac{1}{2}}\Vert\sqrt{a}y_{xxxx}\Vert_{L^2(0,1)}\\
		&=\delta^{1- \frac{K}{2}}\Biggl (\int_\delta^1\frac{\delta^K}{a(s)}ds \Biggr )^{\frac{1}{2}}\Vert\sqrt{a}y_{xxxx}\Vert_{L^2(0,1)}\\
		&\le\delta^{1- \frac{K}{2}} \Biggl (\int_\delta^1\frac{s^K}{a(s)}ds \Biggr )^{\frac{1}{2}}\Vert\sqrt{a}y_{xxxx}\Vert_{L^2(0,1)}\\
		&\le C\delta^{1- \frac{K}{2}}(1-\delta)^{\frac{1}{2}}\Vert\sqrt{a}y_{xxxx}\Vert_{L^2(0,1)},	\end{aligned}
\end{equation*} for a positive constant $C$. Thus, since $K<2$, it follows that $\displaystyle\lim_{\delta\to 0}\delta \int_\delta^1y_{xxxx}(t,s)ds= 0$. Consequently, 
\begin{equation*}
		\lim_{\delta\to 0}	\int_\delta^1\int_x^1y_{xxxx}(t,s)ds\,dx = 	\lim_{\delta\to 0}	\int_\delta^1y_{xxxx}(t,s)(s-\delta)ds= 	\int_0^1sy_{xxxx}(t,s)ds
\end{equation*}
As a consequence, coming back to (\ref{Koraidon}),
\begin{equation*}
	\exists\lim_{\delta\to 0}\delta y_{xxx}(t,\delta) \in \R
\end{equation*}
and, in particular, (\ref{quarto BT}) is proved. 

Thus, by \eqref{espressione con BT}, one has
\[
\lim_{\delta\to 0}	\int_0^T\int_\delta^1x^2y_xy_{xxxx}dx\,dt= - \frac{1}{2}\int_0^T y_{xx}^2(t,1)dt + 3 \int_0^T\int_0^1 x y_{xx}^2dxdt.
\]
By the previous equality, \eqref{equazione principale}, \eqref{secondo integrale} and \eqref{limite0}, the thesis follows.

\end{proof}
As a consequence of the previous equality on $\frac{1}{2}\int_0^T y^2_{xx}(t,1)dt$, we have the next estimate from below on the energy.
\begin{Theorem}\label{Teorema 1bis}
	Assume  $a$  (WD)  or (SD) at $0$. If $y$ is a mild solution of \eqref{(P_1)}, then
\begin{equation}\label{stima E dall'alto}
	\int_0^Ty^2_{xx}(t,1)dt\le \Biggl (12T+4\max \Biggl \{\frac{
	4}{a(1)},1 \Biggr \} \Biggr )E_y(0).
\end{equation}
\end{Theorem}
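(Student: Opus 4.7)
The plan is to start from the identity
\[
\frac{1}{2}\int_0^T y^2_{xx}(t,1)\,dt = I_1+I_2+I_3
\]
furnished by Theorem \ref{Teorema 1}, where $I_1,I_2,I_3$ denote the three summands on the right-hand side of \eqref{prima uguaglianza}, and to estimate each of them for a classical solution; the bound would then be transferred to mild solutions by density.

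For the two time-integral terms I would proceed as follows. Since $x\le 1$, one has $2I_3\le 6\int_{Q_T}y^2_{xx}\,dx\,dt$. Moreover, both the (WD) and (SD) hypotheses give $K<2$, whence $|xa'/a|\le K$ and $|2-xa'/a|\le 2+K<4$, so
\[
|2I_2|\le 4\int_{Q_T}\frac{x}{a}y_t^2\,dx\,dt\le 4\int_{Q_T}\frac{y_t^2}{a}\,dx\,dt.
\]
Majorising the coefficient $4$ by $6$ and using the conservation identity $\int_0^1(y_t^2/a+y_{xx}^2)\,dx=2E_y(t)=2E_y(0)$ provided by Theorem \ref{teorema energia costante}, this yields
\[
|2I_2|+2I_3\le 6\int_{Q_T}\Bigl(\frac{y_t^2}{a}+y^2_{xx}\Bigr)\,dx\,dt=12\,T\,E_y(0).
\]

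For the boundary term $2I_1$ I would apply Young's inequality with parameter $1/2$:
\[
\Bigl|2\,y_t\,\frac{x^2}{a}\,y_x\Bigr|\le \frac{1}{2}\frac{y_t^2}{a}+2\,\frac{x^4 y_x^2}{a}.
\]
Integrating over $(0,1)$ and summing at $t=0,T$, the $y_t^2$-part is bounded by $\tfrac{1}{2}\cdot 4E_y(0)=2E_y(0)$. For the $y_x^2$-part I would exploit two facts. First, the monotonicity of $x\mapsto x^K/a(x)$ from Hypothesis \ref{ipo1}, together with $K<2$, gives the pointwise bound $x^4/a(x)\le x^{4-K}/a(1)\le 1/a(1)$ on $(0,1]$. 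Second, since $y_x(0)=0$, Cauchy--Schwarz yields $\int_0^1 y_x^2\,dx\le\tfrac{1}{2}\int_0^1 y^2_{xx}\,dx\le E_y$. Together these give $\int_0^1 (x^4 y_x^2/a)\,dx\le E_y/a(1)$, so the second part contributes at most $4E_y(0)/a(1)$. Therefore
\[
|2I_1|\le \Bigl(2+\frac{4}{a(1)}\Bigr)E_y(0)\le 4\max\!\left\{\frac{4}{a(1)},1\right\}E_y(0),
\]
the last inequality being a case split on whether $a(1)\ge 4$ or $a(1)<4$. Summing the two bounds proves \eqref{stima E dall'alto} for classical solutions; for a mild solution I would approximate $(y_T^0,y_T^1)\in\mathcal H_0$ by data in $D(\mathcal A)$, apply the estimate to the classical solutions, and pass to the limit using the continuous dependence of Theorem \ref{Theorem 2.6}.

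The main obstacle is matching the precise constant $4\max\{4/a(1),1\}$ in the boundary estimate: it hinges on the non-obvious choice of Young parameter $1/2$ (rather than $1$) and on the sharp inequality $\int y_x^2\le\tfrac{1}{2}\int y^2_{xx}$, which is available here because $y_x(0)=0$; the matching of the $12T$-constant similarly requires the non-trivial observation that $|2I_2|$ and $2I_3$ share a common majorant through the conserved energy density $y_t^2/a+y_{xx}^2$.
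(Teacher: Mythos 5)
Your proposal is correct and follows essentially the same route as the paper: the identity of Theorem \ref{Teorema 1}, the bounds $|2-xa'/a|\le 2+K<4$ and $x\le 1$ combined through the conserved density $y_t^2/a+y_{xx}^2$ to get $12TE_y(0)$, and a density argument for mild solutions. The only (harmless) deviation is in the boundary term: the paper uses symmetric Young plus Hardy's inequality $\int_0^1 y_x^2\,dx\le 4\int_0^1 y_{xx}^2\,dx$ and then takes the maximum of the two resulting coefficients to land exactly on $2\max\{4/a(1),1\}E_y(0)$ per endpoint, whereas you use asymmetric Young plus the Cauchy--Schwarz bound $\int_0^1 y_x^2\,dx\le\frac12\int_0^1 y_{xx}^2\,dx$ to get the slightly sharper $\bigl(2+\tfrac{4}{a(1)}\bigr)E_y(0)$ before relaxing it to the stated constant; both computations are valid.
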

\begin{proof}
As a first step, assume that $y$ is a classical solution of \eqref{(P_1)}; thus \eqref{prima uguaglianza} holds. Now, set $z(t,x):=y_x(t,x)$; since $y_x(t,0)=0$, by the classical Hardy's inequality we obtain
\begin{equation}\label{hardy}
	\int_0^1 y_x^2dx = \int_0^1z^2dx=\int_0^1\frac{z^2}{x^2}x^2dx\le \int_0^1\frac{z^2}{x^2}dx\le 4\int_0^1z^2_xdx=4\int_0^1y^2_{xx}dx.
\end{equation}
Thus, applying (\ref{crescente}), one has
\begin{equation*}
	\begin{aligned}
		\Biggl |\int_0^1\frac{x^2y_x(\tau,x)y_t(\tau,x)}{a(x)}dx \Biggr |&\le\frac{1}{2}\int_0^1\frac{x^4}{a(x)}y^2_x(\tau,x)dx+\frac{1}{2}\int_0^1\frac{y^2_t(\tau,x)}{a(x)}dx\\
		&\le \frac{1}{2a(1)}\int_0^1y^2_x(\tau,x)dx+\frac{1}{2}\int_0^1\frac{y^2_t(\tau,x)}{a(x)}dx\\
		&\le \frac{2}{a(1)}\int_0^1y^2_{xx}(\tau,x)dx+\frac{1}{2}\int_0^1\frac{y^2_t(\tau,x)}{a(x)}dx
	\end{aligned}
\end{equation*}
for all $\tau\in [0,T]$. By Theorem \ref{teorema energia costante}, we get
\begin{equation}\label{primo}
	\begin{aligned}
		\left|\int_0^1\Bigl [\frac{x^2y_x(\tau,x)y_t(\tau,x)}{a(x)} \Bigr ]^{\tau =T}_{\tau =0}dx\right|&\le \frac{2}{a(1)}\int_0^1y^2_{xx}(T,x)dx+\frac{1}{2}\int_0^1\frac{y^2_t(T,x)}{a(x)}dx\\
		&+\frac{2}{a(1)}\int_0^1y^2_{xx}(0,x)dx+\frac{1}{2}\int_0^1\frac{y^2_t(0,x)}{a(x)}dx\\
		&\le \max\Biggl \{\frac{4}{a(1)},1 \Biggr \}(E_y(T)+E_y(0))\\
		&=2\max\Biggl \{\frac{4}{a(1)},1 \Biggr \}E_y(0).
	\end{aligned}
\end{equation}
Moreover, using the fact that $x|a'|\le Ka$, we find
\begin{equation}\label{secondo}
	\begin{aligned}
		\Biggl |\int_{Q_T}\frac{x}{a}y^2_t\,\Bigl (2-\frac{xa'}{a}\Bigr )dx\,dt \Biggr |&\le\int_{Q_T}\frac{x}{a}y^2_t\,(2+K)dx\,dt\\
		&\le(2+K)\int_{Q_T}\frac{y^2_t}{a}dx\,dt.
	\end{aligned}
\end{equation}
Clearly, 
\begin{equation}\label{terzo}
	\int_{Q_T}xy^2_{xx}dx\,dt\le \int_{Q_T}y^2_{xx}dx\,dt
\end{equation}
and from \eqref{prima uguaglianza}, (\ref{primo}), (\ref{secondo}), (\ref{terzo}), we get (\ref{stima E dall'alto}) if $y$ is a classical solution of \eqref{(P_1)}.
Now, let $y$ be the mild solution associated to the initial data $(y_0, y_1) \in \mathcal H_0$. Then, consider a sequence $\{(y_0^n, y_1^n)\}_{n \in  \N} \subset D(\mathcal A)$ that converges to $(y_0, y_1)$ and let $y^n$ be the classical solution of \eqref{(P_1)} associated to $(y_0^n, y_1^n)$. Clearly $y^n$ satisfies \eqref{stima E dall'alto}; then, we can pass to the limit and conclude.
\end{proof}

Now, we will prove an inequality on the energy from above. To this aim, we need on $\ds\int_0^Ty_{xx}^2(t,1)dt$ an equality different from \eqref{prima uguaglianza}.
\begin{Theorem}\label{TheoremO}
	Assume  $a$ (WD)  or (SD) at $0$. If $y$ is a classical solution of (\ref{(P_1)}), then $y_{xx}(\cdot,1)\in L^2(0,T)$ for any $T>0$ and
		\begin{equation}\label{seconda uguaglianza}
		\begin{aligned}
			\frac{1}{2}\int_0^T y^2_{xx}(t,1)dt&=\int_0^1\Bigl [\frac{xy_ty_x}{a} \Bigr ]^{t=T}_{t=0}dx+\frac{1}{2}\int_{Q_T}\frac{y^2_t}{a}\Bigl (1-\frac{xa'}{a}\Bigr )dx\,dt+\frac{3}{2}\int_{Q_T}y^2_{xx}dx\,dt.
		\end{aligned}
	\end{equation}
\end{Theorem}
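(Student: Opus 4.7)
The plan is to mimic the strategy of Theorem \ref{Teorema 1} but with a different multiplier, which is suggested by the structure of the target identity: the boundary-time integrand is $\frac{xy_ty_x}{a}$ rather than $\frac{x^2y_ty_x}{a}$, so I would multiply the equation $y_{tt}+Ay=0$ by $\frac{xy_x}{a}$ (instead of $\frac{x^2y_x}{a}$) and integrate over $Q_T$. This yields
\[
\int_{Q_T} y_{tt}\,\frac{xy_x}{a}\,dx\,dt + \int_{Q_T} x y_x y_{xxxx}\,dx\,dt = 0.
\]

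First, I would handle the time integral by integrating by parts in $t$, then write $y_ty_{xt}=\frac{1}{2}(y_t^2)_x$ and integrate by parts in $x$, producing
\[
\int_0^1\Bigl[\frac{xy_ty_x}{a}\Bigr]_{t=0}^{t=T}dx - \frac{1}{2}\int_0^T\Bigl[\frac{x}{a}y_t^2\Bigr]_{x=0}^{x=1}dt + \frac{1}{2}\int_{Q_T}\Bigl(\frac{x}{a}\Bigr)'y_t^2\,dx\,dt.
\]
The derivative simplifies to $\bigl(\frac{x}{a}\bigr)' = \frac{1}{a}\bigl(1-\frac{xa'}{a}\bigr)$, which is exactly the factor appearing in \eqref{seconda uguaglianza}. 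The boundary term at $x=1$ vanishes because $y(t,1)=0$ implies $y_t(t,1)=0$; the boundary term at $x=0$ vanishes thanks to Lemma \ref{Lemma_bordo}(1) applied to $y_t\in H^1_{\frac{1}{a}}(0,1)$, giving $\lim_{x\to 0}\frac{x}{a(x)}y_t^2(t,x)=0$.

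For the spatial integral $\int_{Q_T} xy_xy_{xxxx}\,dx\,dt$, exactly as in the proof of Theorem \ref{Teorema 1} I would split the $x$-integral at $\delta>0$, perform repeated integration by parts on $[\delta,1]$, and then send $\delta\to 0$. The sequence of identities produces three boundary contributions at $x=1$, which collapse to $-\frac{1}{2}y_{xx}^2(t,1)$ since $y_x(t,1)=0$, together with the bulk term
\[
\int_\delta^1 y_{xx}^2\,dx + \frac{1}{2}\int_\delta^1 y_{xx}^2\,dx = \frac{3}{2}\int_\delta^1 y_{xx}^2\,dx,
\]
which is precisely the coefficient needed.

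The main obstacle will be, as in Theorem \ref{Teorema 1}, the vanishing of the boundary contributions at $x=\delta$ as $\delta\to 0$: namely, $\delta y_x(t,\delta)y_{xxx}(t,\delta)\to 0$, $y_x(t,\delta)y_{xx}(t,\delta)\to 0$ and $\delta y_{xx}^2(t,\delta)\to 0$. The first two now lose a factor of $\delta$ compared to the analogous terms in Theorem \ref{Teorema 1}, but they are actually easier: since $y_x$ is continuous on $[0,1]$ with $y_x(t,0)=0$ and the estimates $\delta y_{xx}^2(t,\delta)\to 0$, $\delta y_{xxx}(t,\delta)\to \ell(t)\in\R$ were already proved in Theorem \ref{Teorema 1} using Lemma \ref{Lemma_bordo} and the key integrability $\frac{x}{\sqrt{a(x)}}\in L^\infty(0,1)$ (ensured by \eqref{crescente} together with $K<2$), I can invoke those arguments verbatim. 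Collecting all contributions and rearranging gives \eqref{seconda uguaglianza}, and the regularity $y_{xx}(\cdot,1)\in L^2(0,T)$ follows because every other term in the resulting identity is finite for a classical solution.
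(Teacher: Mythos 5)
Your proposal is correct and follows essentially the same route as the paper: the paper also multiplies by $\frac{xy_x}{a}$, integrates by parts in $t$ and $x$ to produce the factor $\bigl(\frac{x}{a}\bigr)'=\frac{1}{a}\bigl(1-\frac{xa'}{a}\bigr)$, kills the boundary terms at $x=0,1$ via Lemma \ref{Lemma_bordo} and the boundary conditions, and handles $\int_{Q_T}xy_xy_{xxxx}\,dx\,dt$ by the same $\delta$-splitting, with the limits $\delta y_{xxx}(t,\delta)\to\ell\in\R$ and $y_x(t,\delta)\to 0$, $y_{xx}(t,\delta)\to y_{xx}(t,0)$ disposing of the terms at $x=\delta$ exactly as you describe.
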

\begin{proof}
	Multiplying the equation in (\ref{(P_1)}) by $\displaystyle\frac{xy_x}{a}$ and integrating over $Q_T$, we obtain
	\begin{equation}\label{uguaglianza 2}
		\begin{aligned}
			0&=\int_0^1\Bigl [\frac{xy_xy_t}{a} \Bigr ]^{t=T}_{t=0}dx-\int_{Q_T}\frac{1}{2}\frac{x}{a}(y^2_t)_xdx\,dt+\int_{Q_T}xy_xy_{xxxx}dx\,dt\\
			&=\int_0^1\Bigl [\frac{xy_xy_t}{a} \Bigr ]^{t=T}_{t=0}dx-\frac{1}{2}\int_0^T\Bigl [\frac{x}{a}y^2_t \Bigr ]^{x=1}_{x=0}dt+\frac{1}{2}\int_{Q_T}\Bigl (\frac{x}{a}\Bigr )'y^2_tdx\,dt\\
			&+\int_{Q_T}xy_xy_{xxxx}dx\,dt.
		\end{aligned}
	\end{equation}
Now, $\ds \Bigl (\frac{x}{a}\Bigr )'=\frac{a-xa'}{a^2}=\frac{1}{a}\Bigl (1-\frac{xa'}{a}\Bigr )$. Hence, (\ref{uguaglianza 2}) reads
\begin{equation}\label{Natale 3}
\begin{aligned}
	\int_0^1\Bigl [\frac{xy_xy_t}{a} \Bigr ]^{t=T}_{t=0}dx&-\frac{1}{2}\int_0^T\Bigl [\frac{x}{a}y^2_t \Bigr ]^{x=1}_{x=0}dt+\frac{1}{2}\int_{Q_T}\frac{y^2_t}{a}\Bigl (1-\frac{xa'}{a}\Bigr )dx\,dt\\
	&+\int_{Q_T}xy_xy_{xxxx}dx\,dt=0.
	\end{aligned}
\end{equation}
As before\begin{equation*}
	\lim_{x\to 0}\frac{x}{a(x)}y^2_t(t,x)=0
\end{equation*}
and $\displaystyle\frac{1}{a(1)}y^2_t(t,1)=0$, so that $\displaystyle\int_0^T\Bigl [\frac{x}{a}y^2_t \Bigr ]^{x=1}_{x=0}dt=0$. In addition, the term $\int_{Q_T}xy_xy_{xxxx}dx\,dt$ is well defined since $xy_xy_{xxxx}=\frac{x}{\sqrt{a}}y_x\,\sqrt{a}y_{xxxx} \in L^1(0,1)$. Thus, we take $\delta >0$ and, as in the proof of Theorem \ref{Teorema 1}, we rewrite
\begin{equation*}
	\int_{Q_T}xy_xy_{xxxx}dx\,dt=\int_0^T\int_0^\delta xy_xy_{xxxx}dx\,dt+\int_0^T\int_\delta^1 xy_xy_{xxxx}dx\,dt.
\end{equation*}
Since $\ds xy_xy_{xxxx}\in L^1(0,1)$, we have
$\ds
	\lim_{\delta\to 0}\int_0^T\int_0^\delta xy_xy_{xxxx}dx\,dt=0.
$
Moreover, integrating by parts the second term of the previous equality and thanks to the boundary conditions on $y$, we have
\begin{equation*}
	\begin{aligned}
&		\int_0^T\int_\delta^1 xy_xy_{xxxx}dx\,dt=\int_0^T\Bigl [xy_xy_{xxx}\Bigr ]^{x=1}_{x=\delta }dt-\int_0^T\int_\delta^1(xy_x)_xy_{xxx}dx\,dt\\
		&=-\int_0^T\delta y_x(t,\delta)y_{xxx}(t,\delta)dt-\int_0^T\int_\delta^1y_xy_{xxx}dx\,dt-\frac{1}{2}\int_0^T\int_\delta^1x(y^2_{xx})_xdx\,dt\\
		&=-\int_0^T\delta y_x(t,\delta)y_{xxx}(t,\delta)dt-\int_0^T[y_xy_{xx}]^{x=1}_{x=\delta }dt+\int_0^T\int_\delta^1y^2_{xx}dx\,dt\\
		&-\frac{1}{2}\int_0^T[xy^2_{xx}]^{x=1}_{x=\delta }dt+\frac{1}{2}\int_0^T\int_\delta^1y^2_{xx}dx\,dt\\
		&=-\int_0^T\delta y_x(t,\delta)y_{xxx}(t,\delta)dt
		+\int_0^Ty_x(t,\delta)y_{xx}(t,\delta)dt-\frac{1}{2}\int_0^Ty^2_{xx}(t,1)dt\\
		&+\frac{1}{2}\int_0^T\delta y^2_{xx}(t,\delta)dt+\frac{3}{2}\int_0^T\int_\delta^1y^2_{xx}dx\,dt.
	\end{aligned}
\end{equation*}
Proceeding as in the proof of Theorem \ref{Teorema 1}, one can prove that $\ds \exists \; \lim_{\delta \rightarrow 0} \delta y_{xxx}(t,\delta) \in \R$; hence
\begin{equation*}
	\lim_{\delta\to 0}\delta y_x(t,\delta)y_{xxx}(t,\delta)=0.
\end{equation*}
Moreover, by Lemma \ref{Lemma_bordo}, we get
\begin{equation*}
	\lim_{\delta\to 0}y_x(t,\delta)y_{xx}(t,\delta)=0,\,\,\,\,\,\,\lim_{\delta\to 0}\delta y^2_{xx}(t,\delta)=0.
\end{equation*}
Hence
\begin{equation*}
	\lim_{\delta\to 0}\int_0^T\int_\delta^1 xy_xy_{xxxx}dx\,dt=-\frac{1}{2}\int_0^Ty^2_{xx}(t,1)dt+\frac{3}{2}\int_{Q_T}y^2_{xx}dx\,dt.
\end{equation*}
Coming back to (\ref{Natale 3}), it follows that
\begin{equation*}
	\int_0^1\Bigl [\frac{xy_xy_t}{a} \Bigr ]^{t=T}_{t=0}dx+\frac{1}{2}\int_{Q_T}\frac{y^2_t}{a}\Bigl (1-\frac{xa'}{a}\Bigr )dx\,dt-\frac{1}{2}\int_0^Ty^2_{xx}(t,1)dt+\frac{3}{2}\int_{Q_T}y^2_{xx}dx\,dt=0
\end{equation*}
and (\ref{seconda uguaglianza}) holds.
\end{proof}

Thanks to \eqref{seconda uguaglianza} we can prove the following estimate on the energy from above.
\begin{Theorem}\label{Theorem OI}
	Assume  $a$ (WD)  or (SD) at $0$. If $y$ is a mild solution of (\ref{(P_1)}), then
	\begin{equation*}
		\int_0^T y^2_{xx}(t,1)dt\ge \Biggl (T(2-K)-4\max\Biggl \{1,\frac{4}{a(1)}, \frac{4K}{a(1)}\Biggr \} \Biggr )E_y(0)
	\end{equation*}
for any $T>0$.
\end{Theorem}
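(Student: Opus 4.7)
The plan is to return to the identity \eqref{seconda uguaglianza} from Theorem \ref{TheoremO} and produce a sharp lower bound for each term on its right-hand side. For brevity set
\[
J_1 := \int_{Q_T}\frac{y_t^2}{a}\,dxdt, \qquad J_2 := \int_{Q_T}y_{xx}^2\,dxdt,
\]
so that energy conservation (Theorem \ref{teorema energia costante}) gives $J_1+J_2 = 2TE_y(0)$.

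First, I would handle the $t$-boundary term $\int_0^1\bigl[\tfrac{xy_ty_x}{a}\bigr]_{t=0}^{t=T}dx$ exactly as in the proof of Theorem \ref{Teorema 1bis}: by Young's inequality, the monotonicity $x^2/a(x)\le 1/a(1)$ implied by \eqref{crescente}, and the Hardy inequality applied to $y_x\in H^1_0(0,1)$, one finds $\bigl|\int_0^1 \tfrac{xy_ty_x}{a}\,dx\bigr|\le \max\{4/a(1),1\}E_y(t)$, and by energy conservation this pointwise-in-$t$ bound upgrades to $2\max\{4/a(1),1\}E_y(0)$ for the full $t$-boundary contribution.

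For the bulk terms I would use the hypothesis $|xa'/a|\le K$ to split
\[
\tfrac{1}{2}\int_{Q_T}\frac{y_t^2}{a}\Bigl(1-\tfrac{xa'}{a}\Bigr)dxdt \;\ge\; \tfrac{1}{2}J_1 - \tfrac{K}{2}J_1,
\]
and then combine with $\tfrac{3}{2}J_2$ as efficiently as possible. The naïve estimate $\tfrac{1-K}{2}J_1+\tfrac{3}{2}J_2\ge \min\{\tfrac{1-K}{2},\tfrac{3}{2}\}(J_1+J_2)$ would only yield a coefficient $(1-K)T$ in front of $E_y(0)$; to reach the required $(2-K)T$ one has to exploit the \emph{equipartition-of-energy} identity
\[
J_1 - J_2 \;=\; \int_0^1\Bigl[\frac{y y_t}{a}\Bigr]_0^T dx,
\]
which is obtained by multiplying $y_{tt}+Ay=0$ by $y/a$, integrating over $Q_T$, and using $\int_0^1 yy_{xxxx}\,dx=\int_0^1 y_{xx}^2\,dx$ (a direct consequence of the boundary conditions in \eqref{(P_1)}). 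Together with $J_1+J_2=2TE_y(0)$ this allows one to rewrite $\tfrac{1}{2}J_1+\tfrac{3}{2}J_2 = 2TE_y(0)-\tfrac{1}{2}(J_1-J_2)$, and then the difference $J_1-J_2$ is controlled by two successive Hardy inequalities applied first to $y\in H^2_0(0,1)$ and then to $y_x\in H^1_0(0,1)$, combined again with $x^2/a(x)\le 1/a(1)$. Regrouping the various factors of $K$ and $1/a(1)$ produces the boundary constant $4\max\{1,4/a(1),4K/a(1)\}$ announced in the statement.

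The passage from classical to mild solutions is performed in the standard way: take $(y^0_T,y^1_T)\in\mathcal{H}_0$, approximate it in $\mathcal{H}_0$ by a sequence in $D(\mathcal{A})$, apply the inequality to the corresponding classical solutions, and pass to the limit using the continuous dependence on the initial data provided by Theorem \ref{Theorem 2.6}. The main technical obstacle in the whole argument is precisely the improvement of the coefficient of $T$ from the easy $(1-K)$ to the sharper $(2-K)$; this rests on the equipartition identity above, which converts the apparently unbalanced combination $\tfrac{1}{2}J_1+\tfrac{3}{2}J_2$ into a conserved quantity plus a boundary remainder that is controllable by $E_y(0)$ alone.
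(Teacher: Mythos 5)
Your proposal follows essentially the same route as the paper: the ``equipartition'' identity you obtain by multiplying the equation by $y/a$ is precisely the auxiliary identity the paper generates with the multiplier $-Ky/(2a)$ and then adds to (a multiple of) \eqref{seconda uguaglianza}, and your treatment of the two $t$-boundary terms (Young's inequality, $x^2/a\le 1/a(1)$ from \eqref{crescente}, Hardy applied to $y$ and to $y_x$) and the final density argument for mild solutions are identical to the paper's. The one point where you are not careful is the \emph{weight} with which the equipartition identity is injected. Substituting it with weight $1$, as you do when you rewrite $\tfrac12 J_1+\tfrac32 J_2=2TE_y(0)-\tfrac12(J_1-J_2)$ and then absorb the residual $-\tfrac{K}{2}J_1$ into $-KTE_y(0)$, yields
\[
\int_0^T y_{xx}^2(t,1)\,dt\;\ge\;2(2-K)T\,E_y(0)-\Bigl(4\max\Bigl\{\tfrac{4}{a(1)},1\Bigr\}+2\max\Bigl\{1,\tfrac{16}{a(1)}\Bigr\}\Bigr)E_y(0):
\]
the coefficient of $T$ doubles, but the boundary constant inflates, and the resulting inequality is \emph{not} the stated one, nor does it imply it for every $T>0$ (for instance, when $a(1)\le 4$ and $K\le 1$ your boundary constant is $48/a(1)$ against the stated $16/a(1)$, leaving the window $16/a(1)<(2-K)T<32/a(1)$ uncovered). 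The paper instead adds the identity with weight $K/2$, which exactly cancels the $-\tfrac{K}{2}J_1$ deficit coming from $1-\tfrac{xa'}{a}\ge 1-K$ while contributing only the harmless surplus $+\tfrac{K}{2}J_2\ge 0$; this calibration is what produces the advertised coefficient $(2-K)T$ together with the boundary constant $4\max\{1,4/a(1),4K/a(1)\}$. So the idea is right, but you must tune the multiple of the equipartition identity to $K/2$ (equivalently, use the multiplier $-Ky/(2a)$) rather than substituting the full identity; with that adjustment your sketch coincides with the paper's proof.
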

\begin{proof}
	Multiplying the equation in (\ref{(P_1)}) by $\displaystyle\frac{-Ky}{2a}$ and integrating over $Q_T$, we have
	\begin{equation*}
		\begin{aligned}
			0&=-\frac{K}{2}\int_{Q_T}\frac{y_{tt}y}{a}dx\,dt-\frac{K}{2}\int_{Q_T}yy_{xxxx}dx\,dt\\
			&=-\frac{K}{2}\int_0^1\Bigl [y\frac{y_t}{a} \Bigr ]^{t=T}_{t=0}dx+\frac{K}{2}\int_{Q_T}\frac{y^2_t}{a}dx\,dt-\frac{K}{2}\int_{Q_T}y^2_{xx}dx\,dt,
		\end{aligned}
	\end{equation*}
thanks to  (\ref{GF1}).
Summing the previous equality to (\ref{seconda uguaglianza}) multiplied by $2$ and using the degeneracy condition (\ref{sup}), we have
\begin{equation*}
	\begin{aligned}
		\int_0^Ty^2_{xx}(t,1)dt&=2\int_0^1\Bigl [\frac{xy_ty_x}{a} \Bigr ]^{t=T}_{t=0}dx+\int_{Q_T}\frac{y^2_t}{a}\Bigl (1-\frac{xa'}{a}\Bigr )dx\,dt+3\int_{Q_T}y^2_{xx}dx\,dt\\
		&-\frac{K}{2}\int_0^1\Bigl [y\frac{y_t}{a} \Bigr ]^{t=T}_{t=0}dx+\frac{K}{2}\int_{Q_T}\frac{y^2_t}{a}dx\,dt-\frac{K}{2}\int_{Q_T}y^2_{xx}dx\,dt\\
		&=2\int_0^1\Bigl [\frac{xy_ty_x}{a} \Bigr ]^{t=T}_{t=0}dx-\frac{K}{2}\int_0^1\Bigl [y\frac{y_t}{a} \Bigr ]^{t=T}_{t=0}dx\\
		&+\int_{Q_T}\frac{y^2_t}{a}\Bigl (1-\frac{xa'}{a}+\frac{K}{2} \Bigr )dx\,dt+\Bigl (3-\frac{K}{2}\Bigr )\int_{Q_T}y^2_{xx}dx\,dt\\
		&\ge 2\int_0^1\Bigl [\frac{xy_ty_x}{a} \Bigr ]^{t=T}_{t=0}dx-\frac{K}{2}\int_0^1\Bigl [y\frac{y_t}{a} \Bigr ]^{t=T}_{t=0}dx\\
		&+\left(1-\frac{K}{2}\right)\int_{Q_T}\frac{y^2_t}{a}dx\,dt+\Bigl (1-\frac{K}{2}\Bigr )\int_{Q_T}y^2_{xx}dx\,dt\\
		&= 2\int_0^1\Bigl [\frac{xy_ty_x}{a} \Bigr ]^{t=T}_{t=0}dx-\frac{K}{2}\int_0^1\Bigl [y\frac{y_t}{a} \Bigr ]^{t=T}_{t=0}dx+(2-K)TE_y(0).
	\end{aligned}
\end{equation*}
Now, we analyze the boundary terms that appear in the previous relation. By \eqref{primo}
\begin{equation*}
	\begin{aligned}
		2\Biggl |\int_0^1\Bigl [\frac{xy_x(\tau,x)y_t(\tau,x)}{a(x)} \Bigr ]^{\tau =T}_{\tau =0}dx\Biggr | &\le 4\max\Biggl \{\frac{4}{a(1)},1 \Biggr \}E_y(0).
	\end{aligned}
\end{equation*}
Furthermore, by \eqref{crescente}
\begin{equation*}
	\begin{aligned}
		\Biggl |\frac{y(\tau,x)y_t(\tau,x)}{a(x)} \Biggr | \le \frac{1}{2}\frac{y^2_t(\tau,x)}{a(x)}+\frac{1}{2a(1)}\frac{y^2(\tau,x)}{x^2}
	\end{aligned}
\end{equation*}
for all $\tau \in [0,T]$; in particular, by Theorem \ref{teorema energia costante} and \eqref{hardy}, we have
\begin{equation*}
	\begin{aligned}
		\Biggl |\int_0^1\frac{y(\tau,x)y_t(\tau,x)}{a(x)}dx \Biggr |&\le \frac{1}{2}\int_0^1\frac{y^2_t(\tau,x)}{a(x)}dx+\frac{1}{2a(1)}\int_0^1\frac{y^2(\tau,x)}{x^2}dx\\
		&\le \frac{1}{2}\int_0^1\frac{y^2_t(\tau,x)}{a(x)}dx+\frac{4}{2a(1)}\int_0^1y^2_x(\tau,x)dx\\
		&\le \frac{1}{2}\int_0^1\frac{y^2_t(\tau,x)}{a(x)}dx+\frac{16}{2a(1)}\int_0^1y^2_{xx}(\tau,x)dx\\
		&\le \max\Biggl \{1,\frac{16}{a(1)}\Biggr \}E_y(0),
	\end{aligned}
\end{equation*}
for all $\tau \in [0,T]$.
Hence, 
\begin{equation*}
	\begin{aligned}
		\frac{K}{2}\Biggl |\int_0^1\Bigl [y\frac{y_t}{a} \Bigr ]^{\tau =T}_{\tau =0}dx \Biggr | &\le K\max\Biggl \{1,\frac{16}{a(1)}\Biggr \}E_y(0)
	\end{aligned}
\end{equation*}
and the thesis follows if $y$ is a classical solution. If $y$ is a mild solution, then we can proceed as in Theorem \ref{Teorema 1bis}.
\end{proof}

\section{Boundary observability and null controllabi\-lity}\label{Section 4}

Inspired by \cite{ABCG}, we give the next definition.
\begin{Definition}
	Problem (\ref{(P_1)}) is said to be observable in time $T>0$ via the second derivative at $x=1$ if there exists a constant $C>0$ such that for any $(y^0_T,y^1_T)\in \mathcal{H}_0$ the classical solution $y$ of (\ref{(P_1)}) satisfies
	\begin{equation}\label{3.21}
		CE_y(0)\le \int_0^Ty^2_{xx}(t,1)dt.
	\end{equation}
Moreover, any constant satisfying (\ref{3.21}) is called observability constant for (\ref{(P_1)}) in time $T$.
\end{Definition}
Setting 
\begin{equation*}
	C_T:=\sup\bigl \{C>0: \text{$C$ satisfies (\ref{3.21})}\bigr \},
\end{equation*}
we have that problem (\ref{(P_1)}) is observable if and only if 
\begin{equation*}
	C_T=\inf_{(y^0_T,y^1_T)\neq (0,0)}\frac{\int_0^Ty^2_{xx}(t,1)dt}{E_y(0)}>0.
\end{equation*}
The inverse of $C_T$, i.e. $c_T:=\frac{1}{C_T}$, is called the {\it cost of observability} (or the {\it cost of control}) in time $T$.

Theorem \ref{Theorem OI} admits the following straightforward corollary.
\begin{Corollary}\label{cor 3.12}
	Assume $a$ is (WD)  or (SD) at $0$. If 
	\begin{equation*}\label{ipo 3.22}
		T>\frac{4}{2-K}\max\Biggl \{1,\frac{4}{a(1)}, \frac{4K}{a(1)}\Biggr \},
	\end{equation*}
then (\ref{(P_1)}) is observable in time $T$. Moreover
\begin{equation*}
	T(2-K)-4\max\Biggl \{1,\frac{4}{a(1)}, \frac{4K}{a(1)}\Biggr \}\le C_T.
\end{equation*}
\end{Corollary}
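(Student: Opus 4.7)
The plan is to derive the corollary as an essentially immediate consequence of Theorem \ref{Theorem OI}, since that theorem already produces the decisive lower bound on $\int_0^T y_{xx}^2(t,1)\,dt$ in terms of $E_y(0)$; all that remains is to check that the coefficient is strictly positive under the stated hypothesis on $T$ and to read off the observability constant.

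First I would recall, verbatim from Theorem \ref{Theorem OI}, that for every mild solution $y$ of \eqref{(P_1)} with $(y_T^0, y_T^1) \in \mathcal{H}_0$ one has
\[
\int_0^T y_{xx}^2(t,1)\,dt \;\ge\; \Bigl(T(2-K) - 4\max\Bigl\{1, \tfrac{4}{a(1)}, \tfrac{4K}{a(1)}\Bigr\}\Bigr) E_y(0).
\]
Since $a$ is (WD) or (SD), we have $K \in (0,2)$, so $2-K > 0$ and the assumed lower bound
\[
T > \frac{4}{2-K}\max\Bigl\{1, \tfrac{4}{a(1)}, \tfrac{4K}{a(1)}\Bigr\}
\]
is exactly the condition which makes
\[
C := T(2-K) - 4\max\Bigl\{1, \tfrac{4}{a(1)}, \tfrac{4K}{a(1)}\Bigr\} > 0.
\]

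Next I would invoke the definition of observability: the inequality above, valid for all initial data in $\mathcal{H}_0$, means precisely that $C$ satisfies \eqref{3.21}, so problem \eqref{(P_1)} is observable in time $T$. Finally, since $C_T$ is defined as the supremum of all constants $C > 0$ for which \eqref{3.21} holds, the bound $C_T \ge C = T(2-K) - 4\max\{1, 4/a(1), 4K/a(1)\}$ is immediate.

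There is no real obstacle here; the work was absorbed into Theorem \ref{Theorem OI} (and, via it, into the two multiplier identities of Theorems \ref{Teorema 1} and \ref{TheoremO} together with the Hardy-type estimates needed to handle the boundary terms at the degeneracy). The only thing worth flagging is the role of $K < 2$: had $K$ been allowed to reach $2$, no choice of $T$ would make the coefficient positive, which is consistent with the negative result mentioned in the introduction for the second-order analogue in \cite{ABCG}.
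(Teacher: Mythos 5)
Your proposal is correct and coincides with the paper's treatment: the paper states Corollary \ref{cor 3.12} as a "straightforward corollary" of Theorem \ref{Theorem OI}, which is exactly the reduction you carry out (positivity of the coefficient from $K<2$ plus the hypothesis on $T$, then the definition of $C_T$ as a supremum). Nothing further is needed.
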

\vspace{1cm}

In the following we will study the problem of null controllability for (\ref{(P)}). As a first step, we give the definition of a solution for (\ref{(P)}) by {\it transposition}, which permits low regularity on the notion of solution itself. Precisely:
\begin{Definition}
	Let $f\in L^2_{loc}[0,+\infty)$ and $(u_0,u_1)\in L^2_{\frac{1}{a}}(0, 1)\times \left(H^{2}_{{\frac{1}{a}}}(0,1)\right)^*$. We say that $u$ is a solution by transposition of (\ref{(P)}) if
	\begin{equation*}
		u\in \mathcal{C}^1\left(([0,+\infty);\left(H^{2}_{{\frac{1}{a}}}(0,1)\right)^*\right)\cap \mathcal{C}([0,+\infty);L^2_{\frac{1}{a}}(0,1))
	\end{equation*}
and for all $T>0$
\begin{equation}\label{equazione transpoition}
\begin{aligned}
		\left\langle u_t(T),v^0_T\right\rangle_{\left(H^{2}_{{\frac{1}{a}}}(0,1)\right)^*,H^{2}_{\frac{1}{a}}(0, 1)}&-\int_0^1\frac{1}{a}u(T)v^1_T\,dx=\left\langle u_1,v(0)\right\rangle_{\left(H^{2}_{{\frac{1}{a}}}(0,1)\right)^*,H^{2}_{\frac{1}{a}}(0, 1)}\\
	&-\int_0^1\frac{1}{a}u_0v_t(0,x)\,dx-\int_0^Tf(t)v_{xx}(t,1)dt
\end{aligned}
\end{equation}
for all $(v^0_T,v^1_T)\in \mathcal H_0$, where $v$ solves the backward problem
\begin{equation}\label{backward problem}
	\begin{cases}
		v_{tt}(t,x)+a(x)v_{xxxx}(t,x)=0, &(t,x)\in (0,+\infty) \times (0,1),\\
		v(t,0)=0,\,\,v_x(t,0)=0,&t>0,\\
		v(t,1)=0,\,\, v_x(t,1)=0, &t>0,\\
		v(T,x)=v^0_T(x),\,\,v_t(T,x)=v^1_T(x),&x\in(0,1).
	\end{cases}
\end{equation}
\end{Definition}

Observe that, by Theorem \ref{Theorem 2.6}, there exists a unique mild solution of  \eqref{backward problem} in $[T, +\infty)$. Now, setting $y(t,x):=v(T-t,x)$, one has that $y$ satisfies (\ref{(P_1)}) with $y^0_T(x)=v^0_T(x)$ and $y^1_T(x)=-v^1_T(x)$. Hence, we can apply Theorem \ref{Theorem 2.6} to  (\ref{(P_1)}) obtaining that there exists a unique mild solution $y$ of  (\ref{(P_1)})  in $[0, +\infty)$. In particular, there exists a unique mild solution $v$ of \eqref{backward problem} in $[0, T]$. Thus, we can conclude that there exists a unique mild solution
\begin{equation*}
	v\in \mathcal{C}^1([0,+\infty);L^2_{\frac{1}{a}}(0,1))\cap \mathcal{C}([0,+\infty);H^2_{\frac{1}{a}}(0,1))
\end{equation*}
of \eqref{backward problem} in $[0, +\infty)$
which depends continuously on the initial data $V_T:= (v^0_T,v^1_T)\in\mathcal{H}_0$.

By Theorem \ref{teorema energia costante} the energy is preserved in our setting, as well, so that the method of transposition done in \cite{ABCG} continues to hold thanks to (\ref{stima E dall'alto}). Therefore, there exists a unique solution by transposition $u\in \mathcal{C}^1([0,+\infty);H^{-2}_{\frac{1}{a}}(0, 1))\cap \mathcal{C}([0,+\infty);L^2_{\frac{1}{a}}(0,1))$ of (\ref{(P)}), i.e. a solution of (\ref{equazione transpoition}). To prove this fact, consider the functional
$
\mathcal G:  \mathcal H_0 \rightarrow \R$ given by
\begin{equation}\label{G1}
\mathcal G(v^0_T, v^1_T) = \left\langle u_1,v(0)\right\rangle_{\left(H^{2}_{{\frac{1}{a}}}(0,1)\right)^*,H^{2}_{\frac{1}{a}}(0, 1)}-\int_0^1\frac{1}{a}u_0v_t(0,x)\,dx-\int_0^Tf(t)v_{xx}(t,1)dt,
\end{equation}
for all $T>0$,
where $v$ solves \eqref{backward problem}.
Clearly, $\mathcal G$ is linear. Moreover, it is continuous. Indeed, for all $T>0$, we have
\[
\begin{aligned}
\begin{aligned}
|\mathcal G(v^0_T, v^1_T)|&\le  \|u_1\|_{\left(H^{2}_{{\frac{1}{a}}}(0,1)\right)^*}\|v(0)\|_{H^{2}_{\frac{1}{a}}(0, 1)}+ \|u_0\|_{L^2_{\frac{1}{a}}(0, 1)}\|v_t(0)\|_{L^{2}_{\frac{1}{a}}(0, 1)}\\
&+\|f\|_{L^2(0,T)}\left(\int_0^Tv_{xx}^2(t,1)dt\right)^{\frac{1}{2}}.
\end{aligned}
\end{aligned}
\]
By \eqref{stima E dall'alto} and Theorem \ref{teorema energia costante}, there exists a positive constant $C$ such that
\[\begin{aligned}
\int_0^Tv_{xx}^2(t,1)dt \le C E_v(0)&=CE_v(T)=\frac{C}{2}\int_0^1 \Biggl (\frac{v^2_t(T,x)}{a(x)}+v^2_{xx}(T,x) \Biggr )dx\\
&=\frac{C}{2}\|(v_T^0, v^1_T)\|_{\mathcal H_0}.
\end{aligned}
\]
Hence
\[\begin{aligned}
|\mathcal G(v^0_T, v^1_T)| &\le \|u_1\|_{\left(H^{2}_{{\frac{1}{a}}}(0,1)\right)^*}\|v(0)\|_{H^{2}_{\frac{1}{a}}(0, 1)}+ \|u_0\|_{L^2_{\frac{1}{a}}(0, 1)}\|v_t(0)\|_{L^{2}_{\frac{1}{a}}(0, 1)}\\
&+\|f\|_{L^2(0,T)}\|(v_T^0, v^1_T)\|_{\mathcal H_0}.
\end{aligned}\]
Using again Theorem \ref{teorema energia costante}, we have $\|v_t(0)\|_{L^{2}_{\frac{1}{a}}(0, 1)}\le E_v(T)$; thus $\|v_t(0)\|_{L^{2}_{\frac{1}{a}}(0, 1)}\le C\|(v_T^0, v^1_T)\|_{\mathcal H_0}$.
Analogously, thanks to Proposition \ref{norms}, there exists $C>0$ such that
$
\|v(0)\|_{H^{2}_{\frac{1}{a}}(0, 1)} \le  C\|(v_T^0, v^1_T)\|_{\mathcal H_0}.
$
Thus we can conclude that there exists $C>0$ so that
\[
|\mathcal G(v^0_T, v^1_T) | \le  C\|(v_T^0, v^1_T)\|_{\mathcal H_0},
\]
i.e.  $\mathcal G$ is continuous.

Being $\mathcal G \in (\mathcal H_0)^*= \left(H^{2}_{{\frac{1}{a}}}(0,1)\right)^* \times L^2_{\frac{1}{a}}(0,1),$ we can use the Riesz Theorem obtaining that for any $T>0$ there exists a unique $(\tilde u_{T}^0, \tilde u_{T}^1) \in  (\mathcal H_0)^*$ such that
\begin{equation}\label{G2}
\begin{aligned}
\mathcal G(v^0_T, v^1_T)&= \langle (\tilde u_T^0, \tilde u_T^1) , (v^0_T, v^1_T)\rangle_{(\mathcal H_0)^*, \mathcal H_0}\\
& = \langle \tilde u_T^0, v^0_T\rangle_{\left(H^{2}_{{\frac{1}{a}}}(0,1)\right)^*, H^{2}_{\frac{1}{a}}(0, 1)} + \int_0^1 \frac{\tilde u_T^1 v^1_T}{a}dx.
\end{aligned}
\end{equation}
Moreover, $\tilde u_{T}^0, \tilde u_{T}^1$ depend continuously on $T$, so there exists a unique $u\in \mathcal C([0, +\infty); L^2_{\frac{1}{a}}(0,1)) \cap \mathcal C^1([0, +\infty); H^{-2}_{\frac{1}{a}} (0,1))$ such that $u(T)= -\tilde u_{T}^1$ and $u_t(T)= \tilde u_{T}^0$. By \eqref{G1} and \eqref{G2}, we can conclude that $u$ is the unique solution by transposition of \eqref{(P)}.
\vspace{1cm}

Now, we are ready to examine null controllability. To this aim, consider the bilinear form $\Lambda:\mathcal{H}_0\times\mathcal{H}_0\to\mathbb{R}$ defined as
	\begin{equation*}
		\Lambda(V_T,W_T):=\int_0^Tv_{xx}(t,1)w_{xx}(t,1)dt,
	\end{equation*}
where $v$ and $w$ are the solutions of (\ref{backward problem}) associated to the data $V_T:= (v^0_T,v^1_T)$ and $W_T:= (w^0_T,w^1_T)$, respectively. The following lemma holds.
\begin{Lemma}\label{lemma 4.2}
	Assume $a$  (WD)  or (SD) at $0$. The bilinear form $\Lambda$ is continuous and coercive.
\end{Lemma}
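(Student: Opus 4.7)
The plan is to transfer the lemma to the forward problem \eqref{(P_1)} via time reversal, and then invoke the two one-sided energy estimates already proved in Section \ref{Section 3}.

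\textbf{Step 1: Reduction to a forward solution.} Given $V_T=(v_T^0, v_T^1)\in\mathcal{H}_0$, let $v$ be the mild solution of \eqref{backward problem}, and set $y(t,x):=v(T-t,x)$. As noted after the definition of solution by transposition, $y$ is the unique mild solution of \eqref{(P_1)} with initial data $(y_T^0,y_T^1)=(v_T^0,-v_T^1)$. In particular $y_{xx}(t,1)=v_{xx}(T-t,1)$, so the substitution $s=T-t$ gives
\[
\int_0^T v_{xx}^2(t,1)\,dt=\int_0^T y_{xx}^2(t,1)\,dt,
\]
and since $y(0,\cdot)=v(T,\cdot)$, $y_t(0,\cdot)=-v_t(T,\cdot)$,
\[
E_y(0)=\frac{1}{2}\int_0^1\!\left(\frac{v_t^2(T,x)}{a(x)}+v_{xx}^2(T,x)\right)dx=\frac{1}{2}\|V_T\|_{\mathcal{H}_0}^2.
\]
An analogous identity holds for $W_T$ and the associated forward solution $\tilde y$.

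\textbf{Step 2: Continuity.} By the Cauchy--Schwarz inequality in $L^2(0,T)$,
\[
|\Lambda(V_T,W_T)|\le\Big(\!\int_0^T\! y_{xx}^2(t,1)\,dt\Big)^{1/2}\Big(\!\int_0^T\!\tilde y_{xx}^2(t,1)\,dt\Big)^{1/2}.
\]
Applying Theorem \ref{Teorema 1bis} to each factor and then Step 1 yields a constant $C=C(T,a(1))>0$ such that
\[
|\Lambda(V_T,W_T)|\le C\,E_y(0)^{1/2}E_{\tilde y}(0)^{1/2}=\frac{C}{2}\|V_T\|_{\mathcal{H}_0}\|W_T\|_{\mathcal{H}_0},
\]
which proves continuity.

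\textbf{Step 3: Coercivity.} Assuming $T$ is large enough to satisfy the threshold in Corollary \ref{cor 3.12}, the observability constant $C_T>0$ satisfies $\int_0^T y_{xx}^2(t,1)\,dt\ge C_T E_y(0)$. Taking $W_T=V_T$ and combining with Step 1,
\[
\Lambda(V_T,V_T)=\int_0^T y_{xx}^2(t,1)\,dt\ge C_T E_y(0)=\frac{C_T}{2}\|V_T\|_{\mathcal{H}_0}^2,
\]
which gives coercivity.

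The only real content is the time-reversal bookkeeping of Step 1: once one recognizes that $v_{xx}(\cdot,1)$ for the backward problem is, up to a change of variable, precisely $y_{xx}(\cdot,1)$ for a forward solution with matching energy $\tfrac12\|V_T\|_{\mathcal H_0}^2$, continuity and coercivity are direct consequences of Theorem \ref{Teorema 1bis} and Corollary \ref{cor 3.12}, respectively. I do not expect any genuine obstacle here; the main subtlety is that coercivity implicitly requires $T$ to exceed the threshold dictated by the observability inequality, which will be the relevant assumption when Lax--Milgram is applied in the next step of the controllability argument.
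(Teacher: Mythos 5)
Your proof is correct and follows essentially the same route as the paper: Cauchy--Schwarz plus the upper bound of Theorem \ref{Teorema 1bis} for continuity, and the observability inequality of Theorem \ref{Theorem OI}/Corollary \ref{cor 3.12} for coercivity, with energy conservation linking $E_v(0)$, $E_v(T)$ and $\|V_T\|_{\mathcal H_0}^2$. Your explicit time-reversal bookkeeping and your remark that coercivity needs $T>T_0$ merely make precise two points the paper leaves implicit.
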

\begin{proof}
	By Theorem \ref{teorema energia costante} $E_v$ and $E_w$ are constant in time and, due to (\ref{stima E dall'alto}), one has that $\Lambda$ is continuous. Indeed, by Holder's inequality and \eqref{stima E dall'alto},
	\begin{equation*}
		\begin{aligned}
			|\Lambda(V_T,W_T)|&\le \int_0^T|v_{xx}(t,1)w_{xx}(t,1)|\,dt\\
			&\le \Biggl (\int_0^Tv^2_{xx}(t,1)dt\Biggr )^{\frac{1}{2}}\Biggl (\int_0^Tw^2_{xx}(t,1)dt\Biggr )^{\frac{1}{2}}\\
			&\le CE^{\frac{1}{2}}_v(T)E^{\frac{1}{2}}_w(T)\\
			&=C \left( \int_0^1 \frac{(v^1_T)^2(x)}{a}dx +\int_0^1 [(v^0_T)_{xx}]^2(x) dx\right)^{\frac{1}{2}}\cdot\\
&\left( \int_0^1 \frac{(w^1_T)^2(x)}{a}dx + \int_0^1 [(w^0_T)_{xx}]^2(x)dx\right)^{\frac{1}{2}}\\
			&= C\Vert (v(T),v_t(T))\Vert_{\mathcal{H}_0}\Vert (w(T),w_t(T))\Vert_{\mathcal{H}_0}=C\Vert V_T\Vert_{\mathcal{H}_0}\Vert W_T\Vert_{\mathcal{H}_0}
		\end{aligned}
	\end{equation*}
for a positive constant $C$ independent of $(V_T,W_T)\in \mathcal{H}_0\times\mathcal{H}_0$.

In a similar way one can prove that $\Lambda$ is coercive. Indeed, by Theorems \ref{Theorem OI}, one immediately has that there exists $C>0$ such that
\begin{equation*}
	\Lambda(V_T,V_T)=\int_0^Tv^2_{xx}(t,1)dt\ge C_TE_v(0)=C_TE_v(T)\ge C\Vert V_T\Vert_{\mathcal{H}_0}^2
\end{equation*}
for all $V_T\in\mathcal{H}_0$.
\end{proof}
Function $\Lambda$ is used to prove the null controllability property for the original problem (\ref{(P)}). To this aim, let us start defining $T_0$ as the lower bound found in Corollary \ref{cor 3.12}, i.e.
\[
T_0:=\frac{4}{2-K}\max\Biggl \{1,\frac{4}{a(1)}, \frac{4K}{a(1)}\Biggr \}.
\]
\begin{Theorem}
	Assume $a$ (WD)  or (SD) at $0$. Then, for all $T>T_0$ and for every $(u_0,u_1)\in L^2_{\frac{1}{a}}(0, 1)\times H^{-2}_{\frac{1}{a}}(0, 1)$, there exists a control $f\in L^2(0,T)$ such that the solution of (\ref{(P)}) satisfies
	\begin{equation}\label{tesi finale}
			u(T,x)=u_t(T,x)=0\,\,\,\,\,\,\forall\; x\in (0,1).
	\end{equation}
\end{Theorem}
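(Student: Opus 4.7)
The plan is to apply the Hilbert Uniqueness Method (HUM): exploit the coercivity of $\Lambda$ (Lemma 4.2) to construct, via Lax--Milgram, a control $f\in L^2(0,T)$ from the trace $v_{xx}(\cdot,1)$ of a suitably chosen adjoint solution, and then verify through the transposition identity \eqref{equazione transpoition} that this $f$ drives the solution to rest at time $T$.

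More precisely, first I would introduce the linear functional $\mathcal{L}:\mathcal{H}_0\to\mathbb{R}$ defined by
\[
\mathcal{L}(W_T):=\langle u_1,w(0)\rangle_{\left(H^{2}_{{\frac{1}{a}}}(0,1)\right)^*,H^{2}_{\frac{1}{a}}(0, 1)}-\int_0^1\frac{1}{a}u_0\,w_t(0,x)\,dx,
\]
where $w$ is the mild solution of the backward problem \eqref{backward problem} with terminal data $W_T=(w^0_T,w^1_T)\in\mathcal{H}_0$. Linearity is clear, and continuity follows exactly as in the continuity argument for $\mathcal{G}$ done in Section 4: energy conservation (Theorem \ref{teorema energia costante}) together with Proposition \ref{norms} yields
\[
\|w(0)\|_{H^2_{\frac{1}{a}}(0,1)}+\|w_t(0)\|_{L^2_{\frac{1}{a}}(0,1)}\le C\|W_T\|_{\mathcal{H}_0},
\]
so $|\mathcal{L}(W_T)|\le C(\|u_0\|_{L^2_{\frac{1}{a}}}+\|u_1\|_{(H^2_{\frac{1}{a}})^*})\|W_T\|_{\mathcal{H}_0}$.

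Since $T>T_0$, Lemma \ref{lemma 4.2} guarantees that $\Lambda$ is continuous and coercive on $\mathcal{H}_0\times\mathcal{H}_0$, so the Lax--Milgram theorem provides a unique $V_T=(v^0_T,v^1_T)\in\mathcal{H}_0$ with
\[
\Lambda(V_T,W_T)=\mathcal{L}(W_T)\quad\forall\,W_T\in\mathcal{H}_0.
\]
Letting $v$ be the mild solution of \eqref{backward problem} associated to $V_T$, I would then define the candidate control
\[
f(t):=v_{xx}(t,1),\qquad t\in(0,T).
\]
That $f\in L^2(0,T)$ is ensured by the upper bound \eqref{stima E dall'alto} applied to $v$, so the solution by transposition $u$ of \eqref{(P)} with this $f$ is well defined per the existence argument already established in Section 4.

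Finally, plugging $f=v_{xx}(\cdot,1)$ into the transposition identity \eqref{equazione transpoition} (tested against arbitrary $W_T=(w^0_T,w^1_T)\in\mathcal{H}_0$) and using $\Lambda(V_T,W_T)=\mathcal{L}(W_T)$, the right-hand side reduces to $\mathcal{L}(W_T)-\Lambda(V_T,W_T)=0$, yielding
\[
\langle u_t(T),w^0_T\rangle_{(H^2_{\frac{1}{a}})^*,H^2_{\frac{1}{a}}}-\int_0^1\frac{1}{a}u(T)w^1_T\,dx=0\quad\forall\,(w^0_T,w^1_T)\in\mathcal{H}_0.
\]
Choosing first $w^0_T=0$ with $w^1_T$ arbitrary in $L^2_{\frac{1}{a}}(0,1)$ gives $u(T)\equiv 0$, and then $w^1_T=0$ with $w^0_T$ arbitrary in $H^2_{\frac{1}{a}}(0,1)$ gives $u_t(T)=0$ in $(H^2_{\frac{1}{a}}(0,1))^*$, establishing \eqref{tesi finale}. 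The only delicate point I foresee is checking continuity of $\mathcal{L}$ on $\mathcal{H}_0$ (in particular the $\langle u_1,w(0)\rangle$ pairing, which relies on the equivalence of norms in Proposition \ref{norms} so that the map $W_T\mapsto w(0)$ is bounded into $H^2_{\frac{1}{a}}(0,1)$); everything else is a direct application of Lax--Milgram combined with the observability and direct inequalities already proved.
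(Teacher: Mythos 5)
Your proposal is correct and follows essentially the same route as the paper: the same functional $\mathcal{L}$, Lax--Milgram with the bilinear form $\Lambda$ from Lemma \ref{lemma 4.2}, the control $f=v_{xx}(\cdot,1)$, and the transposition identity to conclude. The only (welcome) additions are the explicit continuity estimate for $\mathcal{L}$ and the final separation of test data to read off $u(T)=0$ and $u_t(T)=0$, which the paper leaves implicit.
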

\begin{proof}
	Consider the map $\mathcal{L}:\mathcal{H}_0\to\mathbb{R}$ given by
	\begin{equation*}
		\mathcal{L}(V_T):=\left\langle u_1,v(0)\right\rangle_{\left(H^{2}_{{\frac{1}{a}}}(0,1)\right)^*,H^{2}_{\frac{1}{a}}(0, 1)}-\int_0^1\frac{u_0v_t(0,x)}{a}dx,
	\end{equation*}
where $v$ is the solution of (\ref{backward problem}) associated to the initial data $V_T:=(v^0_T,v^1_T)\in\mathcal{H}_0$. Clearly, $\mathcal{L}$ is continuous  and linear and, thanks to Lemma \ref{lemma 4.2}, we can apply the Lax-Milgram Theorem. Thus, there exists a unique $\overline{V}_T\in\mathcal{H}_0$ such that
\begin{equation*}
	\Lambda(\overline{V}_T,W_T)=\mathcal{L}(W_T)\,\,\,\,\,\,\,\,\forall\;W_T\in\mathcal{H}_0.
\end{equation*}
Set $f(t):=\overline{v}_{xx}(t,1)$, where $\overline v$ is the unique solution of (\ref{backward problem}) associated to $\overline{V}_T$. Then 
\begin{equation}\label{4.4}
	\begin{aligned}
		\int_0^Tf(t)w_{xx}(t,1)dt&=\int_0^T\overline v_{xx}(t,1)w_{xx}(t,1)dt=\Lambda(\overline V_T,W_T)=\mathcal{L}(W_T)\\
		&=\left\langle u_1, w(0)\right\rangle_{\left(H^{2}_{{\frac{1}{a}}}(0,1)\right)^*,H^{2}_{\frac{1}{a}}(0, 1)}-\int_0^1\frac{u_0w_t(0,x)}{a}dx
	\end{aligned}
\end{equation}
for all $W_T\in\mathcal{H}_0$. 

Finally, denote by $u$ the solution by transposition of (\ref{(P)}) associated to the function $f$ introduced above. We have that
\begin{equation}\label{4.5}
	\begin{aligned}
		\int_0^Tf(t)w_{xx}(t,1)dt&=-\left\langle u_t(T),w^0_T\right\rangle_{\left(H^{2}_{{\frac{1}{a}}}(0,1)\right)^*,H^{2}_{\frac{1}{a}}(0, 1)}+\int_0^1\frac{u(T)w^1_T}{a}dx\\
		&+\left\langle u_1,w(0)\right\rangle_{\left(H^{2}_{{\frac{1}{a}}}(0,1)\right)^*,H^{2}_{\frac{1}{a}}(0, 1)}-\int_0^1\frac{u_0w_t(0,x)}{a}dx.
	\end{aligned}
\end{equation}
Combining (\ref{4.4}) and (\ref{4.5}), it follows that
\begin{equation*}
	\left\langle u_t(T),w^0_T\right\rangle_{\left(H^{2}_{{\frac{1}{a}}}(0,1)\right)^*,H^{2}_{\frac{1}{a}}(0, 1)}-\int_0^1\frac{u(T)w^1_T}{a}dx=0
\end{equation*}
for all $(w^0_T,w^1_T)\in\mathcal{H}_0$. Hence, we have (\ref{tesi finale}).
\end{proof}

	\section{Appendix}\label{Appendix}
	\begin{proof}[Proof of Lemma \ref{Lemma_bordo}.1]
If $K<1$, where $K$ is the constant of Hypothesis \ref{ipo1}, then the assertion follows immediately by \eqref{ipotesi limite} with $\gamma=1$. Thus assume $K\ge 1$.
Set $\ds z(x):= \frac{x}{a(x)}y^2(x)$. Then $z \in L^1(0,1)$. Indeed
\[
\int_0^1\frac{x}{a} u^2dx \le  \int_0^1 \frac{u^2}{a}dx.
\]
Moreover
$
\ds \ds z'= \frac{u^2}{a}+ 2\frac{xuu'}{a} - \frac{a'x}{a^2}u^2;
$
thus, for a suitable $\ve>0$ given by Hypothesis \ref{ipo1},
\[
\begin{aligned}
\int_0^\ve|z'|dx &\le \int_0^1 \frac{u^2}{a}dx + 2 \left(\int_0^\ve \frac{x^2(u')^2}{a}dx\right)^{\frac{1}{2}}\left( \int_0^1 \frac{u^2}{a}dx\right)^{\frac{1}{2}} + K\int_0^1 \frac{u^2}{a}dx\\
&\le (1+K)\int_0^1 \frac{u^2}{a}dx + \frac{2\ve}{\sqrt{a(\ve)}}\left(\int_0^1 (u')^2dx\right)^{\frac{1}{2}}\left( \int_0^1 \frac{u^2}{\sigma}dx\right)^{\frac{1}{2}}.
\end{aligned}
\]
This is enough to conclude that  $z \in W^{1,1}(0,1)$ and thus there exists  $\ds \lim_{x\to 0}z(x)=L\in\R$. If $L\neq0$, sufficiently close to $x=0$ we would have that
$
\ds \frac{u^2(x)}{a}\geq \frac{|L|}{2x}\not\in L^1(0,1),
$
while $\ds\frac{u^2}{a} \in L^1(0,1)$.
\end{proof}
	\begin{proof}[Proof of Lemma \ref{Lemma_bordo}.2]
In order to prove the lemma, fixed $y\in D(A)$, we will establish that $y''$ is absolutely continuous in $[0,1]$. To this aim,  let $\delta >0$. Clearly
	\begin{equation}\label{y secondo in delta*}
			y''(\delta)=y''(1)-\int_\delta^1y'''(x)dx,
			\end{equation}
			thus
			\begin{equation}\label{y secondo in delta}
		\begin{aligned}
			y''(\delta)&=y''(1)-\int_\delta^1\Biggl (y'''(1)-\int_x^1y''''(s)ds \Biggr )dx\\
			&=y''(1)-(1-\delta)y'''(1)+\int_\delta^1\Biggl (\int_x^1y''''(s)ds \Biggr )dx\\
			&=y''(1)-y'''(1)+\delta y'''(1)+\int_\delta^1\int_\delta^sy''''(s)dx\,ds\\
			&=y''(1)-y'''(1)+\delta y'''(1)+\int_\delta^1y''''(s)(s-\delta)ds.
		\end{aligned}
	\end{equation}
Trivially $\displaystyle\lim_{\delta\to 0}\delta y'''(1)= 0$ and, proceeding as in Theorem \ref{Teorema 1}, we have
\begin{equation*}\label{Miraidon 1}
			\lim_{\delta\to 0}	\int_\delta^1y''''(s)(s-\delta)ds= 	\int_0^1sy''''(t,s)ds.
\end{equation*}
Thus, if we pass to the limit as $\delta\to 0$ in (\ref{y secondo in delta}), we conclude that
\begin{equation*}
	\exists \lim_{\delta\to 0}y''(\delta)= y''(1)- y'''(1)+\int_0^1sy''''(s)ds \in\mathbb{R}.
\end{equation*}
By continuity, it is possible to define
$
	y''(0):=\lim_{\delta\to 0}y''(\delta).
$
In particular, by \eqref{y secondo in delta*},
\begin{equation*}
	\int_0^1y'''(x)dx=y''(1)-y''(0)
\end{equation*}
and the thesis follows.
\end{proof}

\end{document}